\numberwithin{equation}{section}
\theoremstyle{definition}
\newtheorem{theorem}[equation]{Theorem}
\newtheorem{corollary}[equation]{Corollary}
\newtheorem{definition}[equation]{Definition}
\newtheorem{lemma}[equation]{Lemma}
\newtheorem{example}[equation]{Example}
\newtheorem{proposition}[equation]{Proposition}
\newtheorem{remark}[equation]{Remark}
\newcommand{\colim@}[2]{%
	\vtop{\m@th\ialign{##\cr
			\hfil$#1\Operator@font colim$\hfil\cr
			\noalign{\nointerlineskip\kern1.5\ex@}#2\cr
			\noalign{\nointerlineskip\kern-\ex@}\cr}}%
}
\newcommand{\colim}{%
	\mathop{\mathpalette\colim@{\rightarrowfill@\scriptscriptstyle}}\nmlimits@
}
\renewcommand{\varprojlim}{%
	\mathop{\mathpalette\varlim@{\leftarrowfill@\scriptscriptstyle}}\nmlimits@
}
\renewcommand{\varinjlim}{%
	\mathop{\mathpalette\varlim@{\rightarrowfill@\scriptscriptstyle}}\nmlimits@
}
\newcommand{\cate}[1]{\mathscr{#1}}
\newcommand{\Hom}[3]{\text{Hom}_{\scalebox{1}{$\scriptscriptstyle #1$}}(#2, #3)}
\newcommand{\Cl}[1]{\text{Cl}({#1})}
\newcommand{\pf}[1]{#1_{\ast}}
\newcommand{\Str}[1]{\text{Str}_{{#1}}}
\DeclareMathOperator{\Fun}{Fun}
\DeclareMathOperator{\opposite}{op}
\DeclareMathOperator{\exitpath}{Exit}
\DeclareMathOperator{\singularset}{Sing}
\newcommand{\op}{^{\opposite}}
\newcommand{\C}{\cate{C}}
\newcommand{\Cat}{\cate{C}\mathrm{at}}
\newcommand{\rnum}{\mathbb{R}}
\newcommand{\exit}[2]{\exitpath_{{#1}}({#2})}
\newcommand{\sing}[1]{\singularset({#1})}
\newcommand{\holink}[2]{{#1}[{#2}]}
\newcommand{\subd}[1]{\text{Sd}({#1})}
\newcommand{\Dec}[1]{\text{D\'ec}_{{#1}}}
\providecommand{\infcat}{\Cat_{\scalebox{1}{$\scriptscriptstyle \infty$}}}
\providecommand{\spaces}{\cate{S}}
\newsavebox{\pullback}
\sbox\pullback{%
	\begin{tikzpicture}%
		\draw (0,0) -- (1ex,0ex);%
		\draw (1ex,0ex) -- (1ex,1ex);%
\end{tikzpicture}}
\newsavebox{\pushout}
\sbox\pushout{%
	\begin{tikzpicture}%
		\draw (0ex,0ex) -- (0ex,1ex);%
		\draw (0ex,1ex) -- (1ex,1ex);%
\end{tikzpicture}}
\tikzset{%
	symbol/.style={%
		draw=none,
		every to/.append style={%
			edge node={node [sloped, allow upside down, auto=false]{$#1$}}}
	}
}
\begin{document}

\title{Finiteness and finite domination in stratified homotopy theory}
  
	\author{Marco Volpe\footnote{University of Toronto, 27 King's College Cir, Toronto, ON M5S 1A1, Canada. email:marco.volpe@utoronto.ca.}}

	\maketitle

\begin{abstract}
    In this paper, we study compactness and finiteness of an $\infty$-category $\C$ equipped with a conservative functor to a finite poset $P$. We provide sufficient conditions for $\C$ to be compact in terms of strata and homotopy links of $\C\rightarrow P$. Analogous conditions for $\C$ to be finite are also given. From these, we deduce that, if $X\rightarrow P$ is a conically stratified space with the property that the weak homotopy type of its strata, and of strata of its local links, are compact (respectively finite) $\infty$-groupoids, then $\exit{P}{X}$ is compact (respectively finite). This gives a positive answer to a question of Porta and Teyssier. If $X\rightarrow P$ is equipped with a conically smooth structure (e.g. a Whitney stratification), we show that $\exit{P}{X}$ is finite if and only the weak homotopy types of the strata of $X\rightarrow P$ are finite. The aforementioned characterization relies on the finiteness of $\exit{P}{X}$, when $X\rightarrow P$ is compact and conically smooth. We conclude our paper by showing that the analogous statement does not hold in the topological category. More explicitly, we provide an example of a compact $C^0$-stratified space whose exit paths $\infty$-category is compact, but not finite. This stratified space was constructed by Quinn. We also observe that this provides a non-trivial example of a $C^0$-stratified space which does not admit any conically smooth structure.
\end{abstract} 
\tableofcontents

\section{Introduction}

A topological space $X$ is said to be \textit{(homotopically) finite} if it is homotopy equivalent to a finite CW-complex, and is said to be \textit{finitely dominated} if it is a retract up to homotopy of a finite CW-complex. A standard reference analyzing the difference between these two notions is \cite{10.2307/1970382}. From a modern perspective, both notions can be understood purely in terms of $\spaces$, the $\infty$-category of $\infty$-groupoids. Specifically, when $X$ is nice enough, we have that $X$ is finite if and only if its weak homotopy type belongs to the smallest subcategory of $\spaces$ closed under pushouts, and containing the empty and the contractible $\infty$-groupoid. Similarly, $X$ is finitely dominated if and only if its weak homotopy type is a compact object in $\spaces$. 

In this paper, we seek to study analogues of finiteness and finite domination within the setting of \textit{stratified topological spaces}. To formulate these analogues in the stratified setting, we find it easier to generalize their higher categorical reformulations mentioned above. Therefore, for the reader's convenience, we begin this introduction by briefly summarizing some of the recent foundational work on defining an appropriate $\infty$-category encompassing the homotopy theory of stratified spaces. 

Recall that any poset $P$ can be supplied with a topology, called the \textit{Alexandroff topology}, with open subsets consisting of the upward closed subsets of $P$. For us, a stratified space is then a topological space $X$ equipped with a function to a poset $P$, which is continuous with respect to the Alexandroff topology. A nice family of stratified spaces, which contains most of the examples coming from algebraic and differential geometry, is that of \textit{conically stratified spaces}. Roughly speaking, these are stratified spaces that, locally around each stratum, admit mapping cylinder neighbourhoods. To any conically stratified space $X\rightarrow P$, \cite{lurie2017higher} associates an $\infty$-category $\exit{P}{X}$. $\exit{P}{X}$ comes equipped with a conservative functor to the poset $P$, and should be thought of as a refinement of the weak homotopy type of $X$. The only non-invertible morphisms in $\exit{P}{X}$ can be visualized as those paths which start in a deeper stratum and immediately leave for the next one. With this in mind, \cite{haine2018homotopy} proposed the following as an effective definition of the homotopy theory of stratified spaces. 

\begin{definition}
    Let $P$ be any poset. We define a \textit{$P$-layered $\infty$-category} to be an $\infty$-category $\C$ equipped with a conservative functor $\C\rightarrow P$. We denote by $\Str{P}$ the full subcategory of ${\infcat}_{/P}$ spanned by the $P$-layered $\infty$-categories. $P$-layered $\infty$-categories are also sometimes referred to as \textit{$P$-stratified weak homotopy types}.
\end{definition}

In \cite[Theorem 0.1.1]{haine2018homotopy}, Haine proves a stratified version of the homotopy hypothesis. In particular, this result shows that any $P$-layered $\infty$-category is of the form $\exit{P}{X}$, for some stratified space $X\rightarrow P$ (see also \cite{waas2024presenting}). We regard Haine's result as convincing evidence that one is entitled to think of the $\infty$-category of $\Str{P}$ as the correct homotopy theory of stratified spaces. 

Just as simplices are the basic building blocks of spaces up to weak homotopy equivalences, the basic building blocks in stratified homotopy theory are some sort of $P$-directed simplices. More precisely, for any strictly ascending chain of elements $p_1<\dots< p_n$ of the poset $P$, there is a $P$-layered $\infty$-category $\{p_1 <\dots< p_n\}\rightarrow P$, given by the corresponding inclusion of posets. The objects $\{p_1 <\dots< p_n\}\rightarrow P$ with $n\leq 2$ can be shown to generate $\Str{P}$ under colimits. Intuitively, this reflects the idea that categories can be built by subsequently attaching objects and arrows. With these observations at hand, we may come back to the formulation of the sought definitions of finiteness and finite domination in stratified homotopy theory.

\begin{definition}\label{finite/compactP-lay}
    A $P$-layered $\infty$-category $\C\rightarrow P$ is said to be \textit{finite} if it belongs to the smallest full subcategory of $\Str{P}$ closed under pushouts and containing the empty $\infty$-category and all objects of the form $\{p_1<\dots< p_n\}\rightarrow P$, with $n\leq 2$. $\C\rightarrow P$ is said to be \textit{compact}, or \textit{finitely dominated}, if it is a retract in $\Str{P}$ of a finite $P$-layered $\infty$-category.
\end{definition}

\cref{finite/compactP-lay} is very natural, but too abstract to be directly checked in practice. The first contribution of this paper is to provide reasonably easy to check conditions that imply finiteness or compactness of a $P$-layered $\infty$-category, when $P$ is a finite poset. These conditions are formulated in terms of  \textit{homotopy links}. Homotopy links were originally introduced by Quinn (see \cite{quinn1988homotopically}) as a purely homotopy theoretical tool to deal with stratified spaces in absence of mapping cylinder neighbourhoods around strata. We recall the purely $\infty$-categorical definition of homotopy links.

\begin{definition}
    Let $\C\rightarrow P$ be any $P$-layered $\infty$-category, and let $p_1<\dots< p_n$ be any ascending chain of elements in $P$. We define the \textit{$p_1<\dots< p_n$-homotopy link} of $\C\rightarrow P$ to be the $\infty$-groupoid
    $$\C[p_1<\dots< p_n]\coloneqq\Hom{\Str{P}}{\{p_1<\dots< p_n\}}{\C}.$$
    When $n=1$, we denote $\C[p_1]$ by $\C_{p_1}$. Moreover, $\C_{p_1}$ is called the \textit{$p_1$-stratum} of $\C\rightarrow P$.
\end{definition}

For any pair $p < q$, there is an evaluation map $\C[p < q]\rightarrow \C_p$. This map should be thought of as analogous to the projection from the $q$-stratum of a mapping cylinder neighbourhood of the $p$-stratum. The precise relationship is explained in \cref{holinksandlocallinks}. Our criterion for finiteness and compactness in terms of homotopy links is the following.

\begin{proposition}[\cref{cptstra+cptlocallinksiscpt}]
    Let $P$ be any finite poset, and let $\C\rightarrow P$ be a $P$-layered $\infty$-category. Assume that the following two conditions hold for $\C$. 
    \begin{enumerate}
        \item For all $p\in P$, the $p$-stratum $\C_p$ is a compact (respectively finite) $\infty$-groupoid.
        \item For all pairs $p<q$, the fibers of the map $\C[p<q]\rightarrow\C_p$ are compact (respectively finite) $\infty$-groupoids.
    \end{enumerate}
    Then $\C\rightarrow P$ is compact (respectively finite).
\end{proposition}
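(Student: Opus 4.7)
The plan is to induct on $|P|$. For the base case $|P| \leq 1$, conservativity of $\C \to P$ identifies $\C$ with its unique stratum $\C_p$, and the claim follows from the fact that every finite (respectively compact) $\infty$-groupoid is built from $\emptyset$ and the point via finite pushouts (respectively as a retract thereof); such presentations transfer to $\Str{P}$ by replacing the point with the generator $\{p\}$.

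For the inductive step with $|P| \geq 2$, I would pick a maximal element $p_0 \in P$, set $Q := P \setminus \{p_0\}$, and form the restriction $\C_Q := \C \times_P Q \in \Str{Q}$. The strata and homotopy links of $\C_Q$ coincide with those of $\C$ at chains in $Q$, so the inductive hypothesis yields that $\C_Q$ is finite/compact in $\Str{Q}$; since the post-composition functor $\Str{Q} \to \Str{P}$ preserves colimits and sends chain generators to chain generators, $\C_Q$ remains finite/compact in $\Str{P}$. Moreover, for each $q < p_0$ the link $\C[q<p_0]$ fibers over the finite/compact $\C_q$ with finite/compact fibers, so by closure of finite (respectively compact) $\infty$-groupoids under fibrations with such base and fibers (a cellular argument over the base), each $\C[q<p_0]$ is itself finite/compact.

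The heart of the proof is a recollement-type pushout decomposition expressing $\C$ in $\Str{P}$ as a finite gluing of the "open" piece $\C_Q$, the "closed" stratum $\C_{p_0}$, and the links $\{\C[q<p_0]\}_{q<p_0}$ joining them. Granted such a decomposition, $\C$ is a finite pushout of finite/compact objects: each link-based gluing piece of the form $\C[q<p_0] \times \{q<p_0\}$ is a finite/compact colimit of the generator $\{q<p_0\}$ indexed by the finite/compact $\C[q<p_0]$, and similarly for the boundary identifications along $\C_q \subset \C_Q$ and $\C_{p_0}$. Finite pushouts preserve finite/compact objects in $\Str{P}$, giving the desired conclusion.

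The main obstacle is formulating and verifying this decomposition precisely. The naive formula — gluing $\coprod_{q<p_0} \C[q<p_0] \times \{q<p_0\}$ onto $\C_Q \sqcup \C_{p_0}$ via source and target evaluations — over-counts morphisms into $\C_{p_0}$: in $\C$, a composition through an intermediate stratum and the corresponding direct arrow in $\C[q<p_0]$ must coincide, while a raw colimit presents them as distinct. The correct decomposition must encode these $\infty$-categorical composition identifications, for instance by realizing $\C$ as the colimit of a Reedy-type simplicial object in $\Str{P}$ whose non-degenerate pieces are assembled from the strata and the length-$2$ chain generators, with higher-simplex coherences supplied automatically by inner-horn fillings (using the generation result that chains of length $\leq 2$ generate $\Str{P}$ under colimits). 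Making this construction rigorous while keeping the total colimit presentation manifestly finite is the delicate combinatorial step where I would focus my effort.
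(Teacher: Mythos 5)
There is a genuine gap, and you have located it yourself: the ``recollement-type pushout decomposition'' that you call the heart of the proof is never constructed, and the naive version of it is, as you observe, false (it fails to identify a composite through an intermediate stratum with the corresponding direct exit arrow). Everything else in your outline is peripheral -- the induction on $|P|$, the restriction to $Q=P\setminus\{p_0\}$, and the observation that a fibration with finite (resp.\ compact) base and fibers has finite (resp.\ compact) total space are all fine, but they do not touch the actual difficulty. Deferring the ``delicate combinatorial step'' of encoding the composition coherences is deferring the entire content of the proposition, so as written the argument does not go through.

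The paper resolves exactly this point by changing models rather than by building the gluing by hand. Via Haine's d\'ecollage equivalence (\cref{equivdecandsd}), an object of $\Str{P}$ is the same thing as a Segal-type presheaf on the finite poset $\subd{P}$ of ascending chains in $P$, whose values are precisely the higher homotopy links $\holink{\C}{p_1<\dots<p_n}$; the Segal condition of \cref{decollages} is the rigorous form of the ``composition identifications'' you are gesturing at with inner-horn fillings. In that model the decomposition you want is tautological: a presheaf is the colimit of representables over its category of elements, which is finite once all the links are finite spaces (the left-fibration argument in \cref{finiteobjstroinglyfinitecat}), and the localization back to $\Str{P}$ preserves colimits and sends representables to the chain generators (\cref{finpreshtofinitestrP}). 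This yields \cref{charfin/cpt}, after which the proposition reduces to showing that \emph{all} higher links $\holink{\C}{p_1<\dots<p_n}$ are compact (resp.\ finite); that follows from your hypotheses by iterating the pullback square
$$\holink{\C}{p_1<\dots<p_n}\simeq \holink{\C}{p_1<\dots<p_{n-1}}\times_{\C_{p_{n-1}}}\holink{\C}{p_{n-1}<p_n}$$
and the closure of finite (resp.\ compact) spaces under fibrations with finite (resp.\ compact) base and fibers -- the one genuinely correct ingredient already present in your sketch. If you want to salvage your approach, you should either prove the needed decomposition (which essentially amounts to reproving the d\'ecollage theorem in your special case) or invoke it.
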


As an application, we provide a positive answer to a question of Porta and Teyssier (see \cite[Conjecture 7.10]{porta2022topological}) in the following theorem.

\begin{theorem}[\cref{conjecturemauro}]\label{conjecturemauroinintro}
    Let $P$ be any finite poset, and let $X\rightarrow P$ be a conically stratified space. Suppose that $X$ satisfies the following properties.
        \begin{enumerate}[(i)]
            \item For all $p\in P$, $\sing{X_p}$ is a compact (respectively finite) $\infty$-groupoid.
            \item For all $p\in P$, $x\in X_p$ and $q>p$, $\sing{Z_q}$ is a compact (respectively finite) $\infty$-groupoid. Here $Z\rightarrow P_{>p}$ is a stratified space appearing in any conical chart $U\times C(Z)\hookrightarrow X$ of $X$ centered at $x$.
        \end{enumerate}
        Then $\exit{P}{X}$ is a compact (respectively finite) $\infty$-category.
\end{theorem}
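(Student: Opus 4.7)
The plan is to apply \cref{cptstra+cptlocallinksiscpt} directly to $\C=\exit{P}{X}$, so the entire task reduces to verifying its two hypotheses for this specific $P$-layered $\infty$-category. The conservativity of $\exit{P}{X}\to P$ is already built into Lurie's construction, so $\exit{P}{X}$ genuinely lives in $\Str{P}$.

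First I would check condition (1) on strata. For any conically stratified space $X\to P$, the $p$-stratum of $\exit{P}{X}$ in the sense of $\Str{P}$ is computed as $\Hom{\Str{P}}{\{p\}}{\exit{P}{X}}$, and this recovers $\singularset(X_p)$. This identification is a standard consequence of the definition of $\exit{P}{X}$: exit paths that factor through the fiber over $\{p\}\subseteq P$ are exactly ordinary simplices in $X_p$. Hypothesis (i) of the theorem then gives directly that $\exit{P}{X}_p$ is compact (respectively finite).

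The serious work is checking condition (2), the fibers of the homotopy link maps $\exit{P}{X}[p<q]\to\exit{P}{X}_p=\sing{X_p}$. The claim to prove is: for each $x\in X_p$, the fiber at $x$ is weakly equivalent to $\sing{Z_q}$ for any conical chart $U\times C(Z)\hookrightarrow X$ centered at $x$, and then (ii) of the theorem gives what we want. This is where I expect the main obstacle to lie, and it is the precise content of what the paper calls the relationship between homotopy links and local links (\cref{holinksandlocallinks}). The strategy would be: using that $\exit{P}{X}$ is a sheaf of $\infty$-categories on $X$ (one of the central properties of Lurie's exit paths for conically stratified spaces), the fiber at $x$ is computed from arbitrarily small conical neighbourhoods of $x$. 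On such a neighbourhood $U\times C(Z)$, the exit paths $\infty$-category factors as $\sing{U}\times\exit{P_{\geq p}}{C(Z)}$, and the cone point is initial in the cone factor. Hence the homotopy link fiber at $x$ is calculated inside $\exit{P_{\geq p}}{C(Z)}$, where exit paths from the cone point to the $q$-stratum retract (via the cone's radial structure) onto $\sing{Z_q}$.

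Granted the two identifications $\exit{P}{X}_p\simeq\sing{X_p}$ and $\mathrm{fib}_x\bigl(\exit{P}{X}[p<q]\to\exit{P}{X}_p\bigr)\simeq\sing{Z_q}$, the hypotheses (i) and (ii) of \cref{conjecturemauroinintro} translate word-for-word into the two hypotheses of \cref{cptstra+cptlocallinksiscpt}, so the latter proposition immediately yields that $\exit{P}{X}$ is compact (respectively finite). The only non-formal ingredient is therefore the cone-point computation of the homotopy link fibers, which I would either invoke from the holinks-vs-local-links section or, if unavailable, establish by the sheaf/retraction argument sketched above.
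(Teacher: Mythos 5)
Your proposal is correct and follows essentially the same route as the paper: the proof of \cref{conjecturemauro} is exactly an application of \cref{cptstra+cptlocallinksiscpt}, with the identification of the homotopy link fibers over $x\in X_p$ with $\sing{Z_q}$ supplied by \cref{holinksandlocallinks} (whose proof uses the same factorization $\sing{U}\times\exit{P_{\geq p}}{C(Z)}$ and initiality of the cone point that you sketch). The only cosmetic difference is that the paper establishes the pullback square via \cite[Proposition A.7.9]{lurie2017higher} rather than a sheaf-theoretic localization argument, but the content is the same.
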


Let us note that our criteria for compactness of $P$-layered $\infty$-categories might be useful in the study of the generalized character variety $\mathbf{Cons}_P(X)$ introduced in \cite[Section 7]{porta2022topological}. In particural, as a consequence of \cite[Proposition 7.7]{porta2022topological}, we deduce that for any stratified space $X\rightarrow P$ satisfying the assumptions of \cref{conjecturemauroinintro}, the algebraic stack $\mathbf{Cons}_P(X)$ is locally geometric.

As a corollary of \cref{conjecturemauroinintro}, when the stratified space $X\rightarrow P$ is equipped with more geometric structure,  we find that the compactness (respectively finiteness) of $\exit{P}{X}$ can be determined solely on strata.

\begin{corollary}[\cref{cpt/finstrata=>cprfinexitinC0/smooth}]\label{corollarycharonstrataC0consmoothinintro}
    Let $P$ be a finite poset, and let $X\rightarrow P$ be a $C^0$-stratified space. Then $\exit{P}{X}$ is compact if and only if, for all $p\in P$, $\sing{X_p}$ is compact. Moreover, when $X$ is equipped with a conically smooth structure, we have that $\exit{P}{X}$ is finite if and only if, for all $p\in P$, $\sing{X_p}$ is finite.
\end{corollary}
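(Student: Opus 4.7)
The statement is a biconditional, so the plan is to handle the two directions separately.

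For the necessity direction, I plan to show that for every $p\in P$ the $p$-stratum functor $(-)_p\colon \Str{P}\to\spaces$ preserves compactness and finiteness. The generators $\{q\}$ and $\{q_1<q_2\}$ of $\Str{P}$ are each sent to either a point or the empty $\infty$-groupoid, depending on whether $p$ belongs to the chain, so in particular to finite $\infty$-groupoids. Granting that $(-)_p$ commutes with pushouts and preserves retracts --- the second automatic and the first obtainable by analyzing how colimits in $\Str{P}$ are inherited from $\infcat_{/P}$ --- a straightforward induction on a finite pushout presentation of $\C$ establishes that $\C_p$ is finite, respectively compact, whenever $\C$ is. Applied to $\C=\exit{P}{X}$, together with the identification $\exit{P}{X}_p\simeq\sing{X_p}$, this yields one direction of both biconditionals at once.

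For the sufficiency direction, I would invoke \cref{conjecturemauroinintro}. Hypothesis (i) there is precisely the given stratum hypothesis, so only hypothesis (ii) needs verification: for every $p\in P$, every $x\in X_p$, and every $q>p$, I must show that $\sing{Z_q}$ is compact (respectively finite), where $Z\to P_{>p}$ is the link appearing in a conical chart $U\times C(Z)\hookrightarrow X$ centered at $x$. The crucial input is that, in both settings, the link $Z$ can be taken to be a \emph{compact} stratified space inheriting the same kind of structure as $X$: compact $C^0$-stratified in the first case and compact conically smooth in the second. I then apply the paper's structural theorems --- that $\exit{P_{>p}}{Z}$ is compact when $Z$ is compact $C^0$-stratified, and finite when $Z$ is compact conically smooth --- and invoke the already-established necessity direction applied to $Z$. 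This delivers the desired compactness, respectively finiteness, of each $\sing{Z_q}$.

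The main obstacle I anticipate is the formal verification that the stratum functor $(-)_p$ preserves pushouts. This is a statement about how colimits in $\Str{P}$ relate to colimits in $\infcat_{/P}$, and a careful treatment likely requires unwinding Haine's description of $\Str{P}$ as a localization of a presheaf category. The auxiliary fact that local links inherit compactness together with the $C^0$ or conically smooth structure is more routine and should follow from the definitions of these local structures.
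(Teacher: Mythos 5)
Your argument is correct and matches the paper's proof: the sufficiency direction is handled exactly as in the text, by feeding the compact (resp.\ conically smooth) links $Z$ of the conical charts into \cref{cptC0strat=>sratacpt} and then into hypothesis (ii) of \cref{conjecturemauro}. The only difference is in the necessity direction, where the paper simply invokes \cref{restrstratapeter} (citing Haine's exodromy appendix for the fact that the stratum functors preserve compact and finite objects) rather than re-deriving, as you propose, that $(-)_p$ preserves pushouts and retracts.
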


We refer to \cite{ayala2017local} for the definitions of conically smooth structures and $C^0$-stratifications. Readers unfamiliar with these precise definitions may safely think of the former as Whitney stratifications (see \cite{nocera2023whitney}), and the latter as a topological analogue (e.g. pseudomanifolds). 

We point out that, in the presence of a conically smooth structure on a compact stratified space $X\rightarrow P$, we are able to obtain more refined information about the finiteness of $\exit{P}{X}$ purely in terms of strata, as opposed to compactness in the $C^0$ case. This is because, when $X\rightarrow P$ is compact and conically smooth,  $\exit{P}{X}$ can be proven to be finite (see \cite[Proposition 2.19]{volpe2022verdier}). On the other hand, we can only establish the compactness of $\exit{P}{X}$ when $X\rightarrow P$ is compact and $C^0$-stratified (see \cref{cptC0strat=>sratacpt}). 

In the last part of the paper, we investigate whether finiteness of $\exit{P}{X}$ holds for a compact $C^0$-stratified space $X\rightarrow P$. Building upon the results in \cite{ho2024complements}, we are able to deduce finiteness in the case of the $C^0$-stratification of a compact topological manifold given by picking a closed, locally flat submanifold (see \cref{finiteexitlocallyflat}). However, in general, the answer is negative, as demonstrated in the work of Quinn (see \cite[Proposition 2.1.4]{quinn1982ends}). We also observe that the lack of finiteness of $\exit{P}{X}$ may serve as a useful criterion to determine the non-existence of conically smooth structures on a compact stratified space $X\rightarrow P$.

\begin{theorem}[\cref{examplequinn}]
      There exists a compact $C^0$-stratified space $X\rightarrow P$ such that the $\infty$-category $\exit{P}{X}$ is not finite. Moreover, $X\rightarrow P$ does not admit any conically smooth structure compatible with its $P$ stratification.
\end{theorem}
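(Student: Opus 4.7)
The plan is to invoke Quinn's construction from \cite[Proposition 2.1.4]{quinn1982ends} and combine it with the smooth finiteness result \cite[Proposition 2.19]{volpe2022verdier}. In outline, Quinn produces a compact $C^0$-stratified space whose deeper stratum has the weak homotopy type of a finitely dominated but not homotopy-finite CW complex $L$; I then transport the non-finiteness of $L$ to $\exit{P}{X}$, and exclude conically smooth structures by the smooth finiteness theorem.

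First I would recall Quinn's construction: starting from a finitely dominated CW complex $L$ with non-vanishing Wall finiteness obstruction (cf.\ \cite{10.2307/1970382}), Quinn builds a compact two-stratum topologically stratified space $X \to \{0 < 1\}$ whose singular stratum $X_0$ is homotopy equivalent to $L$. I would then verify that Quinn's local mapping-cylinder neighbourhoods furnish a $C^0$-stratification in the sense of \cite{ayala2017local}, which reduces to rewriting his local product structure near $X_0$ in the language of conical charts without smoothness.

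Next, to show that $\exit{P}{X}$ is not finite, I would establish that the $p$-stratum functor $(-)_p \colon \Str{P} \to \spaces$ preserves pushouts and sends each generator $\{q_1 < \dots < q_n\}$ (with $n\le 2$) to either $\emptyset$ or a contractible $\infty$-groupoid. Pushout preservation can be obtained by exhibiting an explicit right adjoint (assigning to an $\infty$-groupoid $S$ the $P$-layered $\infty$-category concentrated at $p$ with cotensored homotopy links), or by using a standard presentation of $\Str{P}$ as a presentable functor $\infty$-category with respect to which $(-)_p$ becomes an evaluation. It follows by induction on the cellular build of a finite $P$-layered $\infty$-category that each of its strata is a finite $\infty$-groupoid; since $(\exit{P}{X})_0 \simeq \sing{X_0} \simeq L$ is not finite, $\exit{P}{X}$ cannot be finite. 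Consistently, $\exit{P}{X}$ remains compact by the compactness half of \cref{corollarycharonstrataC0consmoothinintro}, reflecting the finite domination of $L$.

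Finally, if $X \to P$ carried a conically smooth structure compatible with its stratification, then \cite[Proposition 2.19]{volpe2022verdier} would force $\exit{P}{X}$ to be finite, contradicting the previous paragraph. The main obstacle lies in the first step: faithfully translating Quinn's construction into the framework of \cite{ayala2017local} and identifying the singular stratum up to weak homotopy with $L$. Once this is done, the non-finiteness of $\exit{P}{X}$ follows from a formal argument with the stratum functor, and the non-existence of a conically smooth structure is immediate from the smooth finiteness theorem.
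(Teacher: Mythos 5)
Your overall skeleton is the right one and matches the paper's: produce Quinn's example, check it is compact and $C^0$-stratified, observe that one of its strata is finitely dominated but not homotopy finite, conclude non-finiteness of $\exit{P}{X}$ because strata of finite $P$-layered $\infty$-categories are finite (your proposed argument via the pushout-preserving stratum functor is essentially a proof of \cref{restrstratapeter}, which the paper simply cites), and rule out conically smooth structures by the smooth finiteness theorem. However, there are two genuine problems in your first step. First, you have the wrong stratum: you assert that Quinn's \emph{singular} stratum $X_0$ is homotopy equivalent to a finitely dominated, non-finite complex $L$. This cannot happen for a compact $C^0$-stratified space: the closed (deepest) stratum is a closed subset of a compact space and a topological manifold, hence a \emph{compact} topological manifold, which is always homotopy finite (West/Chapman, as used in \cref{cptC0strat=>sratacpt}). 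In Quinn's example it is the \emph{open} stratum --- the complement of the singular set, which is non-compact --- that is finitely dominated but not finite; this is exactly what \cite[Proposition 2.1.3]{quinn1982ends} extracts from the non-vanishing mapping cylinder obstruction, and it is the open stratum to which \cref{restrstratapeter} must be applied.

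Second, your route to the $C^0$-structure via ``Quinn's local mapping-cylinder neighbourhoods'' is backwards: Quinn's example is engineered precisely so that the singular stratum has \emph{no} mapping cylinder neighbourhood (that is what a non-vanishing obstruction means), so there are no such neighbourhoods to rewrite as conical charts. The local conical structure comes instead from the fact that $X=D/G$ is the orbit space of a \emph{locally smooth} $G$-action with boundary on a disc: the linear tubes $G\times_H\rnum^n$ (and their boundary versions) descend to charts of the form $\rnum^l\times C(S^{k-1}/H)$ on the quotient, and one shows by induction on dimension that the (refined) orbit-type stratification is $C^0$ (this is \cref{reforbtypisC0} in the paper, and it is the real content of the ``translation'' step you flag as the main obstacle). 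The distinction matters: local conicality and the existence of a global mapping cylinder neighbourhood of a stratum are different conditions, and the whole point of the example is that the first holds while the second fails. With these two corrections --- non-finiteness located in the open stratum, and the $C^0$-structure obtained from the linear tubes of the locally smooth action rather than from (nonexistent) mapping cylinder neighbourhoods --- your argument coincides with the paper's, and the final step excluding conically smooth structures via \cite[Proposition 2.19]{volpe2022verdier} (equivalently \cref{cpt/finstrata=>cprfinexitinC0/smooth}) is correct as written.
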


\subsection{Acknowledgements}

We thank Ko Aoki, David Ayala, Peter Haine, Sander Kupers, Mauro Porta, Jean-Baptiste Teyssier and Shmuel Weinberger for conversations regarding the topic treated in this paper.
	
	\section{Abstract criterion for compactness and finiteness}

This section is devoted to proving an abstract criterion for compactness (finiteness) of a $P$-layered $\infty$-category, when $P$ is a finite poset. We start by collecting a few preliminary well-known results on finite $\infty$-categories that will be used in what follows. 

 \subsection{Preliminaries}

 \begin{definition}
     An $\infty$-category $\C$ is said to be \textit{finite} if it belongs to the smallest full subcategory of $\infcat$ which contains $\emptyset$, $[0]$, and $[1]$ and is closed under pushouts.
     
     An $\infty$-category $\C$ is said to be \textit{strongly finite} if it is finite and, for each $x, y\in \C$, the space $\Hom{\C}{x}{y}$ is finite. 
 \end{definition}

     \begin{example}
    	Any finite poset $P$ is a strongly finite $\infty$-category. Beware that there exist finite $\infty$-categories which are not strongly finite. The standard example is the homotopy type of $S^1$.
    \end{example}

 \begin{lemma}
     Let $\C\rightarrow P$ be a $P$-layered $\infty$-category. Then the following are equivalent
     \begin{enumerate}
         \item $\C\rightarrow P$ is compact (respectively finite) as an object in $\Str{P}$;
         \item $\C$ is compact (respectively finite) as an object in $\infcat$.
     \end{enumerate} 
 \end{lemma}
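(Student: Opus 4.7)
The plan is to factor the question through the slice ${\infcat}_{/P}$ and study the composition
\[
\Str{P} \overset{\iota}{\hookrightarrow} {\infcat}_{/P} \overset{U}{\longrightarrow} \infcat.
\]
First I would record two structural facts. The forgetful functor $U$ preserves all small colimits, and an object $\C \to P$ is finite (respectively compact) in ${\infcat}_{/P}$ if and only if $\C$ is finite (respectively compact) in $\infcat$: the finiteness reduction is immediate since slice pushouts are computed on underlying objects, and compactness follows from the standard fact that finite limits commute with filtered colimits in $\spaces$. Next, the fully faithful inclusion $\iota$ is closed under small colimits, because conservativity of a functor to $P$ is preserved by colimits in the slice --- invertibility of a morphism in a colimit of $\infty$-categories can be detected at a finite stage. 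Consequently $\iota$ admits a left adjoint $L \colon {\infcat}_{/P} \to \Str{P}$, the ``conservatification'' reflector, and since $\iota$ preserves filtered colimits, $L$ preserves compact objects.

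Direction (1) $\Rightarrow$ (2) is then immediate: $U \circ \iota$ preserves colimits and retracts, and sends $\{p\}$ and $\{p < q\}$ to $[0]$ and $[1]$ respectively, so finiteness and compactness transfer from $\Str{P}$ to $\infcat$.

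For direction (2) $\Rightarrow$ (1), suppose $\C$ is finite (respectively compact) in $\infcat$. By the first fact above, $(\C \to P)$ is finite (respectively compact) in ${\infcat}_{/P}$. Applying $L$ --- which preserves colimits and compact objects --- yields $L(\C \to P)$ finite (respectively compact) in $\Str{P}$. A direct check on the generators of finite ${\infcat}_{/P}$-objects shows that $L$ sends them to generators of $\Str{P}$: one has $L(\emptyset) = \emptyset$, $L([0] \to \{p\}) = \{p\}$, $L([1] \to \{p < q\}) = \{p < q\}$, and the only non-trivial case $L([1] \to \{p\}) \simeq \{p\}$ --- here the edge of $[1]$ maps to an identity and is therefore inverted by the conservatification, collapsing $[1]$ to the walking equivalence $J \simeq [0]$. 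Finally, since $\C \to P$ is already conservative, $L$ acts as the identity on it and the conclusion follows.

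The main obstacle is the claim that $\iota$ is closed under colimits in ${\infcat}_{/P}$, i.e., that a colimit of conservative functors in the slice is again conservative. This requires analyzing how invertibility propagates through colimits of $\infty$-categories; it can be done directly, or by invoking \cite{haine2018homotopy}, where $\Str{P}$ is shown to be a colimit-closed reflective subcategory of ${\infcat}_{/P}$.
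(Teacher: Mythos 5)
Your proof is correct and follows essentially the same route as the paper: both directions hinge on the colimit-preserving reflector $\mathrm{Env}_P=L$ together with closure of $\Str{P}$ under colimits in ${\infcat}_{/P}$ (which you rightly flag as the one input to import from \cite{haine2018homotopy}). The only cosmetic difference is that the paper runs the $(2)\Rightarrow(1)$ argument through ${\infcat}_{/\C}$ with generators $[n]\to\C$ rather than through ${\infcat}_{/P}$ with generators $[0]$ and $[1]$, and simply cites the exodromy paper for the compactness half that you argue directly.
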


 \begin{proof}
        The part of the lemma concerning compactness is proven in \cite[Lemma A.3.10]{haine2024exodromy}, so we focus on finiteness.

        We first observe that the functor $\Str{P}\rightarrow\infcat$ preserves finite objects. Indeed, by \cite[Observation A.3.10]{haine2024exodromy}, the inclusion $\Str{P}\hookrightarrow\infcat{_{/P}}$ preserves colimits. Since the functor $\infcat{_{/P}}\rightarrow\infcat$ preserves colimits, we deduce that $\Str{P}\rightarrow\infcat$ does too. The claim about preservation of finite objects then follows by observing that objects of the form $\{p_1<\cdots<p_n\}\rightarrow P$, with $n\leq 2$, are sent to $\infty$-categories equivalent to either $[0]$ or $[1]$.
    	
    	Assume now that $\C$ is finite as an object of $\infcat$. Consider the composite 
    	$${\infcat}_{/\C}\rightarrow {\infcat}_{/P}\xrightarrow{\text{Env}_P}\Str{P},$$
        where $\text{Env}_P$ is the left adjoint to the inclusion $\Str{P}\hookrightarrow{\infcat}_{/P}$ (see for example \cite[Observation A.3.3]{haine2024exodromy}). Both functors preserve colimits, and the composite sends objects of the form $[n]\rightarrow \C$ to conservative functors $[m]\rightarrow P$, with $m\leq n$. Indeed, this holds because any localization of $[n]$ is equivalent to $[m]$, for some $m\leq n$. The desired conclusion then follows by observing that $\C$, as the terminal object of ${\infcat}_{/\C}$, belongs to the smallest full subcategory of ${\infcat}_{/\C}$ closed under finite colimits and containing objects of the type $[n]\rightarrow \C$.
 \end{proof}

    \begin{definition}
    	Let $I$ be a strongly finite $\infty$-category, and let $F\in\Fun(I^{op}, \spaces)$. We say that $F$ is \textit{finite} if it belongs to the smallest full subcategory of $\Fun(I^{op}, \spaces)$ closed under pushouts which contains the initial object and representable presheaves. 
    \end{definition}

    \begin{lemma}\label{finiteobjstroinglyfinitecat}
    	Let $I$ be a strongly finite $\infty$-category, and let $F\in\Fun(I^{op}, \spaces)$. Then $F$ is compact (respectively finite) if it takes values in $\spaces^{\omega}$ (respectively $\spaces^{fin}$).
    \end{lemma}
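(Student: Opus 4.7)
The plan is to apply the $\infty$-categorical Yoneda density theorem to express $F$ as a colimit of representables, and then argue that the indexing $\infty$-category $\int F$ is itself finite (respectively compact).

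The density theorem gives a canonical equivalence
$$F \simeq \colim_{(i,x) \in \int F} h_i,$$
where $h_i := \Hom{I}{-}{i}$ is the representable at $i$ and $\int F \to I$ is the right fibration unstraightening $F$. Each representable $h_i$ is, by definition of the class of finite presheaves, itself finite. Hence if $\int F$ is a finite (resp.\ compact) $\infty$-category, then $F$ is a finite colimit (resp.\ a retract of a finite colimit) of finite presheaves, and therefore finite (resp.\ compact). We are thus reduced to showing that $\int F$ is finite (resp.\ compact).

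To prove this, I would use the hypothesis that $I$ is a finite $\infty$-category to write $I$ as a finite iterated pushout in $\infcat$ of objects of the form $[0]$ and $[1]$. Via the equivalence between presheaves on $I$ and right fibrations over $I$, this transfers to a pushout decomposition of $\int F$ in terms of the total spaces $\int(F|_{I_\alpha})$ of restrictions. The base pieces are handled directly: $\int(F|_{[0]})$ is simply the $\infty$-groupoid $F(i)$, which is finite (resp.\ compact) by hypothesis; and $\int(F|_{[1]})$, corresponding to a map $f : F(1) \to F(0)$ of $\infty$-groupoids, admits the pushout description
$$\int(F|_{[1]}) \;\simeq\; F(0) \sqcup_{F(1)} \bigl(F(1) \times [1]\bigr),$$
where $F(1) \to F(0)$ is $f$ and $F(1) \hookrightarrow F(1) \times [1]$ is inclusion at the $0$-endpoint. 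This exhibits $\int(F|_{[1]})$ as a pushout of finite (resp.\ compact) $\infty$-categories, using that the corresponding class of $\infty$-groupoids is closed under finite products with $[1]$ and under disjoint unions. Assembling these pieces, $\int F$ is a finite iterated pushout of finite (resp.\ compact) $\infty$-categories, hence finite (resp.\ compact).

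The principal technical obstacle is justifying the compatibility of the Grothendieck construction with pushouts in the base. This is a consequence of the straightening/unstraightening equivalence, which identifies $\Fun(I\op, \spaces)$ with the $\infty$-category of right fibrations over $I$: for a pushout $I \simeq I_1 \sqcup_J I_2$ in $\infcat$, the canonical comparison map $\int(F|_{I_1}) \sqcup_{\int(F|_J)} \int(F|_{I_2}) \to \int F$ can be checked to be an equivalence by inspecting fibers and cartesian lifts over each object and morphism of $I$. Once this step is in hand, the remaining ingredients are routine.
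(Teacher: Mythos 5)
Your overall architecture coincides with the paper's: both arguments invoke the density theorem to write $F$ as a colimit of representables indexed by its category of elements $\int F$, and both reduce the lemma to showing that $\int F$ is finite (respectively compact) when the base $I$ is finite and the fibers are finite (respectively compact) spaces. The paper simply \emph{cites} that last fact --- for finiteness it appeals to a result on left fibrations with finite base and finite fibers, and it outsources the compact case entirely to a result of Aoki --- whereas you attempt to prove it from scratch by induction on a cell decomposition of $I$ into copies of $[0]$ and $[1]$.

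The gap sits exactly at the step you flag as the principal technical obstacle. Writing $I\simeq I_1\sqcup_J I_2$ and forming the comparison map $\int(F|_{I_1})\sqcup_{\int(F|_J)}\int(F|_{I_2})\to\int F$, you propose to verify it is an equivalence ``by inspecting fibers and cartesian lifts over each object and morphism of $I$''. This does not work as stated: the left-hand side is a pushout computed in $\infcat$, and a pushout of right fibrations over $I_1$, $J$, $I_2$ is not a priori a right fibration over $I$ --- its mapping spaces are given by a coend over freely generated composable strings rather than fiberwise --- so there are no fibers or cartesian lifts to inspect until one already knows the map is an equivalence. The straightening/unstraightening equivalence identifies $\Fun(I\op,\spaces)$ with right fibrations over $I$ and tells you a right fibration is determined by its restrictions, but it says nothing about total spaces commuting with pushouts in the base. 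The statement you actually need is that base change along $\int F\to I$, i.e.\ the functor $\infcat_{/I}\to\infcat_{/\int F}$ given by pullback, preserves colimits; this holds because right fibrations are cartesian fibrations and cartesian fibrations are flat (exponentiable) in the sense of Appendix B.3 of Lurie's \emph{Higher Algebra}. With that input your induction does go through --- the computations for the cells $[0]$ and $[1]$, the mapping-cylinder description of $\int(F|_{[1]})$, and the closure of finite (resp.\ compact) $\infty$-categories under finite colimits, retracts, and products with $[1]$ are all correct --- and it in effect reproves the statement the paper cites. If you prefer not to invoke flatness, the economical fix is to cite the finiteness of total spaces of left/right fibrations with finite base and fibers directly, as the paper does.
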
	

    \begin{proof}
    	The statement about compact objects follows from \cite[Proposition 2.8]{aoki2023tensor}. Now let $i$ be any object of $I$, and consider the evaluation functor $\text{ev}_i : \Fun(I^{op},\spaces)\rightarrow\spaces$. 
    	Since $I$ is strongly finite, we know that $\text{ev}_i$ sends representable presheaves to finite spaces. Thus, since $\text{ev}_i$ preserves pushouts, we see that it has to preserve finite objects. 
    	
    	Now assume that $F\in\Fun(I^{op},\spaces)$ takes values in finite spaces. Consider the left fibration $p : I_{/F}\rightarrow I$ given by the category of elements of $F$. Since $I$ is finite and by assumption the fibers of $p$ are also finite, by \cite[Remark 6.5.4]{calmes2023hermitian} we see that $I_{/F}$ must be finite. Therefore, writing $F$ as colimit of representable functors indexed by $I_{/F}$, we get that $F$ is finite, and so the proof is concluded.
    \end{proof}

    \subsection{Criterion for compactness and finiteness}

    In this section we give a first criterion for compactness (respectively finiteness) of a $P$-layered $\infty$-category in terms of its higher homotopy links. This criterion is demonstrated making use of an alternative model for the homotopy theory of stratified spaces, the so called \textit{$P$-d\'ecollages}. This alternative model is built upon the equivalence between $\infcat$ and \textit{complete Segal spaces}. We refer to \cite{haine2018homotopy} for more details about $P$-d\'ecollages. 

    \begin{definition}
         Let $P$ be any poset. We define the \textit{poset of subdivisions of $P$}, denoted by $\subd{P}$, to be the full subcategory of $\Str{P}$ spanned by objects of the form $\{p_1<\cdots<p_n\}\rightarrow P$, for any strictly ascending chain of elements $p_1<\cdots<p_n$ in $P$.
    \end{definition}

 \begin{definition}\label{decollages}
    A presheaf $F\in\Fun(\subd{P}\op,\spaces)$ is said to a \textit{$P$-d\'ecollage} if, for any $\{p_1<\cdots<p_n\}\in\subd{P}$, the canonical map
     $$F(\{p_1<\cdots<p_n\})\rightarrow F(\{p_1<p_2\})\times_{\{p_2\}}\cdots\times_{\{p_{n-1}\}}F(\{p_{n-1}<p_n\})$$
     is an isomorphism. We denote by $\Dec{P}$ the full subcategory of $\Fun(\subd{P}\op, \spaces)$ spanned by the $P$-d\'ecollages. 

     The inclusion $\Dec{P}\hookrightarrow\Fun(\subd{P}\op,\spaces)$ preserves limits and filtered colimits, and therefore it admits a left adjoint, that we denote by $\text{Seg}_P : \Fun(\subd{P}\op, \spaces)\rightarrow\Dec{P}$.
 \end{definition}

 As a Bousfield localization of $\infcat{_{/P}}$, $\Str{P}$ has all colimits. It follows that the inclusion $\subd{P}\hookrightarrow\Str{P}$ extends uniquely to a colimit preserving functor $\Fun(\subd{P}\op, \spaces)\rightarrow\Str{P}$. Moreover, the fact that we isomophisms $$\{p_1<p_2\}\cup_{\{p_2\}}\cdots\cup_{\{p_{n-1}\}}\{p_{n-1}<p_n\}\simeq\{p_1<\cdots<p_n\}$$ in $\Str{P}$, implies that $\subd{P}\hookrightarrow\Str{P}$ induces a colimit preserving functor 
 \begin{align}\label{comparedecandsd}
     \Dec{P}\rightarrow\Str{P}.
 \end{align}
 \cite[Theorem 1.1.7]{haine2018homotopy} proves the following result.

 \begin{theorem}\label{equivdecandsd}
     The functor (\ref{comparedecandsd}) is an equivalence of $\infty$-categories.
 \end{theorem}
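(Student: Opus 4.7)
The plan is to construct a right adjoint to the functor in (\ref{comparedecandsd}) and verify that the unit and counit of the resulting adjunction are equivalences. Define
$$R:\Str{P}\rightarrow\Fun(\subd{P}\op,\spaces),\quad R(\C)(\{p_1<\cdots<p_n\})\coloneqq\Hom{\Str{P}}{\{p_1<\cdots<p_n\}}{\C}.$$
The first step is to check that $R$ factors through $\Dec{P}$. Since the underlying functor $\C\rightarrow P$ of any $P$-layered $\infty$-category is conservative, a map $\{p_1<\cdots<p_n\}\rightarrow\C$ in $\Str{P}$ is exactly a composable chain of $1$-simplices in $\C$ whose images in $P$ agree with the given chain; such data is plainly equivalent to a tuple of $1$-chain maps $\{p_i<p_{i+1}\}\rightarrow\C$ that agree on the intermediate vertices $\{p_i\}$, which is precisely the décollage Segal condition.

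Next, I would verify that $R$ is right adjoint to the functor $L$ in (\ref{comparedecandsd}). Since $L$ is obtained by left Kan extension of the Yoneda embedding $\subd{P}\hookrightarrow\Str{P}$ along $\subd{P}\hookrightarrow\Fun(\subd{P}\op,\spaces)$ and then passing through the localization $\text{Seg}_P$, this is a formal consequence of the universal property of presheaves together with the fact that $\Dec{P}$ is a reflective subcategory and $R$ lands there. The adjointness identity $\Hom{\Str{P}}{L(F)}{\C}\simeq\Hom{\Dec{P}}{F}{R(\C)}$ holds on representables $F=\{p_1<\cdots<p_n\}$ by the definition of $R$, and extends to all $F$ since both sides are colimit-preserving in $F$.

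It remains to check that the unit and counit are equivalences. The unit $F\rightarrow R(L(F))$ is an equivalence on representables, essentially by the Yoneda lemma combined with the previous paragraph; since both $F\mapsto F$ and $F\mapsto R(L(F))$ preserve colimits in $\Dec{P}$ and representables generate $\Dec{P}$ under colimits, the unit is an equivalence everywhere. For the counit $L(R(\C))\rightarrow\C$, one needs to exhibit $\C$ as the colimit, in $\Str{P}$, of its diagram of chains $\subd{P}_{/R(\C)}\rightarrow\Str{P}$. Here I would use the fact that the forgetful functor $\Str{P}\rightarrow\infcat$ detects colimits of diagrams whose images are conservative over $P$ (via \cite[Observation A.3.10]{haine2024exodromy}), reducing the problem to proving that any $\infty$-category $\C$ equipped with a conservative functor to a poset $P$ is the colimit in $\infcat$ of the diagram of its chains. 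This last identity is in turn a consequence of the standard fact that $\C$ is the colimit of its simplices, combined with conservativity: a non-degenerate simplex $[n]\rightarrow\C$ factors, under $\C\rightarrow P$, through a chain $\{p_0<\cdots<p_m\}$ with $m\leq n$, and the cofinality of the chain subdiagram inside the full simplex diagram can be verified by the usual join-is-contractible argument.

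The main obstacle is the last step, namely establishing cofinality of the chain subdiagram. The cleanest way I see to handle it is to show that for any $P$-layered $\C$, the inclusion $\subd{P}_{/R(\C)}\hookrightarrow(\Delta_{/\C})\times_{\infcat_{/P}}\Str{P}$ is cofinal by invoking Quillen's Theorem A: for each simplex $\sigma:[n]\rightarrow\C$, the relevant slice is the poset of factorizations of the induced chain in $P$, which has a terminal object given by the image chain; this terminality makes the slice weakly contractible and concludes the argument.
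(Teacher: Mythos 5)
The paper does not actually prove this statement: it is quoted directly from Haine (\cite[Theorem 1.1.7]{haine2018homotopy}), so your attempt has to be judged on its own merits. The skeleton you set up is the right one (restricted Yoneda functor $R$, the adjunction $L\dashv R$, reduction to unit and counit), and your treatment of the counit is essentially sound: density of $\Delta$ in $\infcat$, the observation that $\text{Env}_P$ carries $[n]\rightarrow P$ to a chain, and cofinality of the chain subdiagram together give $L(R(\C))\simeq\C$. (The cleanest way to get the cofinality is to note that the inclusion $\subd{P}_{/R(\C)}\hookrightarrow\Delta_{/\C}$ is right adjoint to $\sigma\mapsto\text{Env}_P(\sigma)$, hence cofinal; your ``terminal object of the poset of factorizations'' should really be an \emph{initial} object of the relevant comma category, and a simplex degenerate over $P$ but not in $\C$ does not strictly factor through its image chain, so the factorization must be mediated by $\text{Env}_P$ --- but these are repairable imprecisions.)

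The genuine gap is in the unit. You claim that $F\mapsto R(L(F))$ preserves colimits in $\Dec{P}$ and deduce that the unit is an equivalence from its validity on representables. But $R$ is a right adjoint and does not preserve colimits as a functor to $\Fun(\subd{P}\op,\spaces)$: for $P=\{0<1<2\}$ the pushout $\{0<1\}\cup_{\{1\}}\{1<2\}\simeq\{0<1<2\}$ in $\Str{P}$ is not sent to the levelwise pushout (evaluating at $\{0<2\}$ gives a point on one side and the empty space on the other). If you instead mean colimits computed internally to $\Dec{P}$, i.e.\ after applying $\text{Seg}_P$, then the assertion that $R\circ L$ preserves them is circular: identifying $\text{Seg}_P$ of a presheaf colimit of representables with the décollage of the corresponding layered $\infty$-category is precisely what the unit statement asserts. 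Put differently, once the counit is known to be an equivalence, $R$ exhibits $\Str{P}$ as the full subcategory of $S$-local presheaves for $S$ the class of maps inverted by $L$; since $S$ contains the spine inclusions, this essential image is \emph{contained} in $\Dec{P}$, but the reverse containment --- that every décollage is $S$-local, equivalently lies in the image of $R$ --- is the substantive content of the theorem. It requires actually constructing a layered $\infty$-category out of an arbitrary décollage (in Haine's proof, by extending $F$ to a simplicial space over the nerve of $P$, checking the Segal and completeness conditions, the latter using conservativity over a poset, and invoking the equivalence of $\infcat$ with complete Segal spaces). No formal colimit argument substitutes for this step.
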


 \begin{remark}\label{finpreshtofinitestrP}
    	Let $F\in\Fun(\subd{P}\op, \spaces)$ be a presheaf lying in the smallest subcategory of $\Fun(\subd{P}\op, \spaces)$ closed under pushouts and containing the initial object and all representable presheaves. Denote by $L_P$ the composition of $\text{Seg}_P$ with the equivalence (\ref{comparedecandsd}). Then $L_P(F)$ is finite. This follows immediately by observing that each representable presheaf on $\subd{P}$ lies in $\Dec{P}$, and $\text{Seg}_P$ preserves colimits. 
    \end{remark}

    \begin{proposition}\label{charfin/cpt}
    	Let $P$ be a finite poset, and let $\C\rightarrow P$ be a $P$-layered $\infty$-category. Suppose that, for each sequence $\{p_1<\dots < p_n\}$, $\holink{\C}{p_1<\dots < p_n}\in\spaces$ is compact (respectively finite). Then $\C\rightarrow P$ is compact (respectively finite) as an object in $\text{Str}_P$.
    \end{proposition}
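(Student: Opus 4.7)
The plan is to transport the problem to the category $\Dec{P}$ of $P$-d\'ecollages via \cref{equivdecandsd}, where the hypothesis on homotopy links becomes a pointwise statement on the values of an associated presheaf, and then apply \cref{finiteobjstroinglyfinitecat}.

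First I would identify the d\'ecollage corresponding to $\C\to P$ under the equivalence $\Dec{P}\simeq\Str{P}$. Explicitly, it is the presheaf $F\in\Fun(\subd{P}\op,\spaces)$ sending $\{p_1<\cdots<p_n\}$ to the homotopy link $\C[p_1<\cdots<p_n]=\Hom{\Str{P}}{\{p_1<\cdots<p_n\}}{\C}$; the d\'ecollage condition is automatic, since mapping spaces out of a pushout split as the required pullback. Because $P$ is finite, so is the poset $\subd{P}$ of strictly ascending chains in $P$, and in particular $\subd{P}$ is a strongly finite $\infty$-category. The hypothesis states precisely that $F$ takes values in compact (respectively finite) $\infty$-groupoids, so \cref{finiteobjstroinglyfinitecat} yields that $F$ is compact (respectively finite) as an object of $\Fun(\subd{P}\op,\spaces)$.

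To conclude, let $L_P$ denote the composition of $\text{Seg}_P$ with the equivalence (\ref{comparedecandsd}); since $F$ already lies in $\Dec{P}$, we have $L_P(F)\simeq(\C\to P)$. It therefore suffices to show that $L_P$ preserves compactness and finiteness. Preservation of finiteness is exactly \cref{finpreshtofinitestrP}. For compactness, $L_P$ is a left adjoint whose right adjoint is the composite of the equivalence with $\Dec{P}\hookrightarrow\Fun(\subd{P}\op,\spaces)$; as noted in \cref{decollages}, this inclusion preserves filtered colimits, so $L_P$ preserves compact objects. No substantive obstacle arises; one simply needs to keep straight that the two parallel notions of ``finite presheaf'' used in \cref{finiteobjstroinglyfinitecat} and \cref{finpreshtofinitestrP} coincide by definition.
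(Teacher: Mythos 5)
Your proposal is correct and follows essentially the same route as the paper: transport $\C\to P$ to a presheaf $F$ on $\subd{P}$ via the d\'ecollage equivalence, apply \cref{finiteobjstroinglyfinitecat} using that $\subd{P}$ is a finite poset, and push back through $L_P$. If anything, you are slightly more careful than the paper's own write-up, which cites only \cref{finpreshtofinitestrP} at the last step (a statement about finiteness), whereas you supply the needed extra observation for the compact case, namely that $L_P$ preserves compact objects because its right adjoint preserves filtered colimits.
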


    \begin{proof}
    	By \cref{equivdecandsd}, there exists $F\in\Fun(\subd{P}\op, \spaces)$ such that $L_P(F)\simeq(\C\rightarrow P)$. Moreover, the assumptions on $\C\rightarrow P$ imply that we can assume that $F$ takes values in compact (respectively finite) spaces. Note that, when $P$ is finite, $\text{Sd}(P)$ is a finite poset as well. Therefore, by \cref{finiteobjstroinglyfinitecat}, it follows that $F$ is a finite object in $\Fun(\text{Sd}(P)^{op}, \spaces)$. But $(\C\rightarrow P)\simeq L_P(F)$, and so by \cref{finpreshtofinitestrP} we see that $\C\rightarrow P$ is compact (respectively finite). 
    \end{proof}

 We'll also need the following partial converse to \cref{charfin/cpt}.

 \begin{proposition}\label{restrstratapeter}
     Let $\C\rightarrow P$ be $P$-layered $\infty$-category, and assume that $\C\rightarrow P$ is compact (finite). Then, for any $p\in P$, $\C_p$ is compact (finite).
 \end{proposition}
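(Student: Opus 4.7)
My plan is to prove the proposition by establishing that the stratum functor $(\C \to P) \mapsto \C_p$ from $\Str{P}$ to $\spaces$ preserves all small colimits. Once this is known, the proposition follows immediately: the generators $\emptyset$, $\{p_1\}$, and $\{p_1 < p_2\}$ of finite $P$-layered $\infty$-categories have $p$-strata equal to $\emptyset$ or $*$, both finite; by induction on pushouts any finite $\C \to P$ has finite $\C_p$; and a compact $\C \to P$, being a retract of a finite one, has $\C_p$ a retract of a finite space, hence compact.

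To prove the colimit-preservation I would pass to the equivalent description of $\Str{P}$ as $\Dec{P}$ via \cref{equivdecandsd}: under this equivalence, $\C \to P$ corresponds to the $P$-d\'ecollage sending $\{p_1 < \cdots < p_n\}$ to $\C[p_1 < \cdots < p_n]$, whose value at a singleton $\{p\}$ is precisely $\C_p$. Since $\text{Seg}_P$ is a left adjoint, colimits in $\Dec{P}$ are obtained by applying $\text{Seg}_P$ to the corresponding colimit in $\Fun(\subd{P}\op, \spaces)$, and because evaluation at $\{p\}$ commutes with colimits in presheaves, it suffices to show that $\text{Seg}_P$ preserves the value at singletons. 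This in turn reduces to the observation that the representable presheaf $\subd{P}(-, \{p\})$ is itself a $P$-d\'ecollage: for every chain $\{q_1 < \cdots < q_n\}$ with $n \geq 2$ both sides of the Segal condition are empty, because morphisms in $\subd{P}$ are inclusions of subchains over $P$ and no chain of length $\geq 2$ admits such a morphism into $\{p\}$. Given this, the adjunction between $\text{Seg}_P$ and the inclusion of $\Dec{P}$, together with Yoneda, produces a natural equivalence $\text{Seg}_P(F)(\{p\}) \simeq F(\{p\})$ for every presheaf $F$.

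The main obstacle I anticipate is the careful verification that the singleton representable is a $P$-d\'ecollage and that the resulting identification passes cleanly through the localization; everything else is formal manipulation. Notably, this approach shows more than the proposition requires: the stratum functor is in fact colimit-preserving, which in particular implies preservation of filtered colimits, and hence of compact objects, directly.
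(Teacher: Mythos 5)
Your overall architecture is sound---reduce to the claim that the stratum functor $\Str{P}\simeq\Dec{P}\xrightarrow{\mathrm{ev}_{\{p\}}}\spaces$ preserves colimits, then deduce finiteness by induction on the generating pushouts and compactness by closure under retracts---and the key assertion $\text{Seg}_P(F)(\{p\})\simeq F(\{p\})$ is in fact true. (For what it is worth, the paper gives no internal argument here; it simply cites \cite[Proposition A.3.17]{haine2024exodromy}.) However, the step where you derive this assertion from the observation that the representable presheaf $y_{\{p\}}=\subd{P}(-,\{p\})$ is a $P$-d\'ecollage has a genuine gap. Locality of $y_{\{p\}}$ controls maps \emph{into} $y_{\{p\}}$, whereas Yoneda computes $\text{Seg}_P(F)(\{p\})$ as the space of maps \emph{out of} $y_{\{p\}}$; the adjunction only yields an equivalence between maps $y_{\{p\}}\to\text{Seg}_P F$ and maps $\text{Seg}_P y_{\{p\}}\to\text{Seg}_P F$, which says nothing. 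A minimal counterexample to the inference: for the Bousfield localization of $\spaces$ (presheaves on a point) at $S^1\to\ast$, the unique representable $\ast$ is local, yet $L(S^1)$ evaluated at the point is $\ast\not\simeq S^1$. So ``the representable at $\{p\}$ is local'' cannot by itself imply that evaluation at $\{p\}$ is unchanged by $\text{Seg}_P$.

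The fix is to argue on the other side of the localization. Evaluation at $\{p\}$ preserves all colimits of presheaves, so the class of maps it inverts is strongly saturated; hence it suffices to check that it inverts the generating local equivalences, namely the spine inclusions $y_{\{p_1<p_2\}}\cup_{y_{\{p_2\}}}\cdots\cup_{y_{\{p_{n-1}\}}}y_{\{p_{n-1}<p_n\}}\to y_{\{p_1<\cdots<p_n\}}$. Since a singleton $\{p\}$ admits at most one map over $P$ into any chain, both sides evaluate at $\{p\}$ to $\ast$ if $p$ occurs in the chain and to $\emptyset$ otherwise, so these maps are indeed inverted. It follows that every $\text{Seg}_P$-equivalence, in particular the unit $F\to\text{Seg}_P F$, becomes an equivalence after evaluation at $\{p\}$, which is exactly what you need; with this substitution the remainder of your proof (identification of the strata of the generators, induction on pushouts, and closure of compact objects under retracts) goes through.
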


 \begin{proof}
     This is proven in \cite[Proposition A.3.17]{haine2024exodromy}.
 \end{proof}

     \begin{definition}
     	Let $P$ be any poset, and let $\C\rightarrow P$ be in $\text{Str}_P$. We say that $\C\rightarrow P$ has \textit{compact local links} if each fiber of the map $\holink{\C}{p< q}\rightarrow \C_p$ is a compact object in $\spaces$. Similarly, we say that $\C\rightarrow P$ has \textit{finite local links} if each fiber of the map $\holink{\C}{p< q}\rightarrow \C_p$ is a finite object in $\spaces$.
     \end{definition}

      \begin{remark}
    	Not all $P$-layered $\infty$-categories have finite or compact local links. In fact, one can find finite ones that do not have finite or compact local links. An example is given by taking $P=[1]$ and $S^1\leftarrow\ast\rightarrow S^1$, considered as an object in $\Fun(\subd{P}\op,\spaces) \simeq \Str{P}$. 
    \end{remark}

     \begin{lemma}\label{cptloclink=>cptrestrtostrata}
         Let $P$ be any poset, and let $\C\rightarrow P$ be in $\text{Str}_P$. Assume that $\C\rightarrow P$ has compact local links. Let $q$ be any element in $P$, and denote  by $j:\C_q\rightarrow \C$ the inclusion of the $q$-stratum. Then the right Kan extension functor 
         $$\pf{j}:\Fun(\C_q, \spaces)\rightarrow\Fun(\C, \spaces)$$
         preserves filtered colimits. 
     \end{lemma}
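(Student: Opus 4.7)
The plan is to use the pointwise formula for the right Kan extension and reduce the question to a statement about limits over compact $\infty$-groupoids. Since colimits in $\Fun(\C, \spaces)$ are computed pointwise, it suffices to show that for every $c \in \C$ the functor $F \mapsto (\pf{j}F)(c)$ preserves filtered colimits. The pointwise Kan extension formula gives
$$(\pf{j}F)(c) \simeq \lim_{K_c} F, \qquad K_c \coloneqq \C_q \times_{\C} \C_{c/},$$
where $F$ is pulled back to $K_c$ along the evident projection.

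The key step is to analyse the shape of $K_c$ according to the image $p \in P$ of $c$. If $p > q$, then conservativity of $\C \to P$ forces $K_c = \emptyset$, and $(\pf{j}F)(c) \simeq \ast$. If $p = q$, then $\mathrm{id}_c$ is an initial object of $K_c \simeq (\C_q)_{c/}$, which is therefore contractible, giving $(\pf{j}F)(c) \simeq F(c)$. If $p < q$, then every morphism of $K_c$ projects to a morphism of $\C_q$; since $\C_q$ is an $\infty$-groupoid by conservativity, such morphisms are invertible, so $K_c$ is itself an $\infty$-groupoid. Unravelling the definition of $\holink{\C}{p<q}$, one then identifies $K_c$ with the fiber of the evaluation $\holink{\C}{p<q} \to \C_p$ at $c$, which is compact in $\spaces$ by the compact local links hypothesis.

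The second step invokes the standard fact that, whenever $K$ is a compact object of $\spaces$, the functor $\lim_K : \Fun(K, \spaces) \to \spaces$ preserves filtered colimits. Via straightening--unstraightening $\Fun(K, \spaces) \simeq \spaces_{/K}$, the terminal object corresponds to $\mathrm{id}_K$, which is a compact object of $\spaces_{/K}$ precisely when $K$ is compact in $\spaces$. Applying this to each of the three cases above yields the pointwise preservation of filtered colimits, and hence the lemma.

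The main obstacle is the geometric identification in the third case of $K_c$ with the fiber of $\holink{\C}{p<q} \to \C_p$ at $c$; this relies on the definition of the homotopy link together with the fact that the evaluation at $p$ of $\{p<q\} \hookrightarrow P$ has a single preimage. Once this identification is in place, the remainder of the argument is a formal assembly of the pointwise Kan extension formula and the behaviour of limits over compact $\infty$-groupoids.
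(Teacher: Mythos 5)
Your proposal is correct and follows essentially the same route as the paper: reduce via the pointwise Kan extension formula to compactness of the comma $\infty$-groupoid $(\C_q)_{c/}$, split into cases by the stratum of $c$, and in the case $p<q$ identify this comma category with the fiber of $\holink{\C}{p<q}\rightarrow\C_p$ at $c$ (which the paper does via an explicit chain of pullback squares). The only nitpick is that your case analysis should read ``$p$ not less than or equal to $q$'' rather than ``$p>q$'' so as to cover incomparable elements, but the same conservativity argument gives $K_c=\emptyset$ there as well.
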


     \begin{proof}
         Let $x$ be any object in $\C$, and let $\C_p$ be the unique stratum to which $x$ belongs. By the pointwise formula for right Kan extensions, to show that $\pf{j}$ preserves filtered colimits it suffices to prove that the slice $(\C_q)_{x/}$ is a compact object in $\infcat$. 

         If $p$ is not less or equal to $q$, the slice $(\C_q)_{x/}$ is empty, and when $p=q$, it has an initial object. Therefore, we can assume that $p< q$. We show that $(\C_q)_{x/}$ fits in a pullback square 
         $$
         \begin{tikzcd}
             (\C_q)_{x/}\arrow[r]\arrow[d] & \holink{\C}{p< q}\arrow[d] \\
             {[0]}\arrow[r, "x"] & \C_p.
         \end{tikzcd}
         $$
         Since we assumed that $\C$ has compact local links, this will conclude the proof.

         By definition of the slice, we have a pullback square
         $$
         \begin{tikzcd}
             (\C_q)_{x/}\arrow[r]\arrow[d]\arrow[dr, phantom, "\usebox\pullback" , very near start, color=black] & \Fun([1],\C) \arrow[d, "{(\text{ev}_0,\text{ev}_1)}"] \\
             {[0]}\times \C_q\arrow[r, "x\times j"] & \C\times \C.
         \end{tikzcd}
         $$
         But the square above can be factored as 
         $$
         \begin{tikzcd}
             (\C_q)_{x/}\arrow[r]\arrow[d] & \holink{\C}{p< q}\arrow[r]\arrow[d]\arrow[d, "{(\text{ev}_0,\text{ev}_1)}"]\arrow[dr, phantom, "\usebox\pullback" , very near start, color=black] & \Fun([1], \C) \arrow[d, "{(\text{ev}_0,\text{ev}_1)}"] \\
             {[0]}\times \C_q\arrow[r] & \C_p\times \C_q\arrow[r] & \C\times \C,
         \end{tikzcd}
         $$
         where the square on the right is a pullback by definition. Therefore, also the square on left is a pullback. By composing it with the pullback square
         \begin{equation*}
             \begin{tikzcd}
             {[0]}\times \C_q\arrow[r]\arrow[d]\arrow[dr, phantom, "\usebox\pullback" , very near start, color=black] & \C_p\times \C_q\arrow[d] \\
             {[0]}\arrow[r, "x"] & \C_p
            \end{tikzcd}
        \end{equation*}
        we then get the desired conclusion.
    \end{proof}
    
    \begin{proposition}\label{cptstra+cptlocallinksiscpt}
    	Let $P$ be any finite poset, and let $\C\rightarrow P$ be in $\Str{P}$. Assume that $\C\rightarrow P$ has compact (finite) local links, and that the strata of $\C\rightarrow P$ are compact (finite). Then $\C\rightarrow P$ is compact (finite).
    \end{proposition}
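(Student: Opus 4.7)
The plan is to apply \cref{charfin/cpt}: it suffices to show that for every strictly ascending chain $p_1 < \cdots < p_n$ in $P$, the homotopy link $\holink{\C}{p_1 < \cdots < p_n}$ is a compact (respectively finite) $\infty$-groupoid. I would prove this by induction on the length $n$. The case $n=1$ is just the strata hypothesis. The case $n=2$ asks that the total space of $\holink{\C}{p_1 < p_2} \to \C_{p_1}$ be compact/finite; this map has compact/finite base by the strata hypothesis and compact/finite fibres by the local links hypothesis, so it will follow from the auxiliary fact below.

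For the inductive step $n \ge 3$, I would regroup the d\'ecollage decomposition of \cref{decollages} and \cref{equivdecandsd} as
$$\holink{\C}{p_1 < \cdots < p_n} \;\simeq\; \holink{\C}{p_1 < \cdots < p_{n-1}} \times_{\C_{p_{n-1}}} \holink{\C}{p_{n-1} < p_n},$$
and consider the projection $\pi$ onto the first factor. A short pullback chase identifies the fibre of $\pi$ over a chain $(x_1 \to \cdots \to x_{n-1})$ with the fibre of the local link map $\holink{\C}{p_{n-1} < p_n} \to \C_{p_{n-1}}$ at $x_{n-1}$, which is compact/finite by hypothesis. The base of $\pi$ is compact/finite by the inductive hypothesis, so the claim again reduces to the same auxiliary fact.

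The auxiliary fact I need is: \emph{if $f\colon E \to B$ is a map of $\infty$-groupoids with $B$ and every fibre of $f$ compact (respectively finite), then $E$ is compact (respectively finite)}. For the finite case, I would induct on a presentation of $B$ as an iterated pushout of copies of $[0]$ starting from $\emptyset$: both base cases are immediate, and in the pushout step universality of colimits in $\spaces$ exhibits $E$ as the corresponding pushout of finite $\infty$-groupoids (this is the groupoidal shadow of \cite[Remark 6.5.4]{calmes2023hermitian}, already invoked in the proof of \cref{finiteobjstroinglyfinitecat}).

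The compact case is where I expect the main obstacle to lie, since compact $\infty$-groupoids are only retracts of finite ones and cellular induction does not apply directly. My plan is to write a compact $B$ as a retract $B \xrightarrow{i} B' \xrightarrow{r} B$ with $B'$ finite, pull $f$ back along $r$ to obtain $E' \to B'$ whose fibres remain compact, and conclude that $E'$ is compact via the finite case. The relation $r \circ i \simeq \mathrm{id}_B$ then identifies $i^{\ast} E'$ with $E$ and produces a factorisation $E \to E' \to E$ of $\mathrm{id}_E$, exhibiting $E$ as a retract of the compact space $E'$, as required.
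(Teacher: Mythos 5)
Your proof is correct and follows essentially the same route as the paper's: reduce via \cref{charfin/cpt} to compactness/finiteness of the higher homotopy links, then induct on chain length using the d\'ecollage pullback square $\holink{\C}{p_1<\dots<p_n}\simeq\holink{\C}{p_1<\dots<p_{n-1}}\times_{\C_{p_{n-1}}}\holink{\C}{p_{n-1}<p_n}$. The only difference is that you spell out the auxiliary fact (compact/finite base and fibres imply compact/finite total space, via cellular induction over the base together with a retract argument in the compact case), which the paper simply asserts at each step of the same induction.
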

    
    \begin{proof}
    	By \cref{charfin/cpt}, it will suffice to show that our assumptions imply that for each sequence $\{p_1<\dots < p_n\}$, $\holink{\C}{p_1<\dots < p_n}\in\spaces$ is compact (finite). We have a pullback square
        $$
          \begin{tikzcd}
\holink{\C}{p_1<\dots < p_n} \arrow[d] \arrow[r] \arrow[dr, phantom, "\usebox\pullback" , very near start, color=black] & \holink{\C}{p_{n-1}< p_n} \arrow[d] \\
\holink{\C}{p_1<\dots < p_{n-1}} \arrow[r]       &  {\C}_{p_{n-1}}.                  
           \end{tikzcd}
        $$
        By assumption, we know that the fibers of the right vertical map are compact (finite). Therefore, it will suffice to show that $\holink{\C}{p_1<\dots < p_{n-1}}$ is compact (finite). Considering a finite number of pullback squares as above, we can reduce the question to proving that $\holink{\C}{p_1 < p_2}$ is compact (finite). But we know that $\C_{p_1}$ is compact (finite), and the fibers of $\holink{\C}{p_1< p_2}\rightarrow \C_{p_1}$ are compact (finite) as well. Therefore, we can conclude that $\holink{\C}{p_1< p_2}$ is compact (finite).
    \end{proof}
	
\section{Applications to conically stratified spaces}

In this section, we apply the abstract criterion for compactness and finiteness proven previously to $P$-layered $\infty$-categories coming from topology. We find conditions on a conically stratified topological space $X\rightarrow P$ that imply that its exit paths $\infty$-category $\exit{P}{X}$ is compact or finite. We then obtain more refined results, when specializing to the case when $X\rightarrow P$ is $C^0$-stratified, or admits a conically smooth atlas. We refer to \cite[Appendix A]{lurie2017higher} for a definition of $\exit{P}{X}$, and to \cite{ayala2017local} for the notion of $C^0$-stratifications and conically smooth atlases.

\subsection{Geometric interpretation of local links}

We start by providing a geometric interpretation of the condition of having compact or finite local links. We study the fibers of the map 
$$\exit{P}{X}[p<q]\rightarrow\sing{X_p}$$
when $X\rightarrow P$ is a conically stratified space, and relate those to the geometric local links of $X$. 

 Let $Z\rightarrow Q$ be any conically stratified topological space. The continuous map
	
	$$
	\begin{tikzcd}[row sep= tiny]
		Z\times I \arrow[r] & C(Z)  \\
		(z, t)\arrow[r, mapsto] & {[z, t]}
	\end{tikzcd}
	$$
	
	induces a functor
	
	$$\exit{Q}{Z}\rightarrow\Fun(\exit{[1]}{I}, \exit{C(Q)}{C(Z)})\simeq\Fun([1], \exit{C(Q)}{C(Z)})$$
	
	where the stratification $I\rightarrow[1]$ is given by taking $\{0\}\subset I$ as a closed stratum. Since all paths given by the map above start at the cone point $\ast$, we get an induced map
	$$\exit{Q}{Z}\rightarrow\exit{C(Q)}{C(Z)}_{\ast/}$$
	and thus, by adjunction a functor
	\begin{equation}\label{joinmapstocone}
		[0]\ast\exit{Q}{Z}\rightarrow\exit{C(Q)}{C(Z)},
	\end{equation}

    which sends the initial object on the left hand side to the cone point.
    
    \begin{lemma}\label{exitcone}
    	The map (\ref{joinmapstocone}) is an equivalence of $\infty$-categories.
    \end{lemma}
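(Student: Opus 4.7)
The plan is to reduce the equivalence to a verification on homotopy links, using the décollage characterization of $\Str{C(Q)}$ from \cref{equivdecandsd}. Both sides are $C(Q)$-layered (the cone of a conically stratified space is itself conically stratified, so $\exit{C(Q)}{C(Z)}$ is well-defined via \cite[Appendix A]{lurie2017higher}), and the map respects the stratification by construction, sending the join cone point to the geometric cone point $\ast$. By \cref{equivdecandsd} it therefore suffices to show that the induced maps on the spaces $\holink{-}{c_1<\dots<c_n}$ are equivalences for every strictly ascending chain in $C(Q) = \{\ast\} \sqcup Q$.

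First I would check the strata, i.e.\ the case $n=1$. Over the cone point, both sides contribute a single contractible stratum. Over $q \in Q$, the $q$-stratum of $C(Z)$ is $Z_q \times (0,1]$, which deformation retracts onto $Z_q$, so $\exit{C(Q)}{C(Z)}_q \simeq \sing{Z_q} = \exit{Q}{Z}_q = ([0]\ast\exit{Q}{Z})_q$.

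Next I would analyze the higher links. For a chain $q_1 < \dots < q_n$ lying entirely in $Q$, both homotopy links agree with $\holink{\exit{Q}{Z}}{q_1<\dots<q_n}$: on the left this is by the definition of the join, and on the right it follows from the straight-line rescaling $(z,t,s)\mapsto [z,(1-s)+st]$, which identifies exit paths between strata of $Z\times(0,1]$ with exit paths in $Z$ itself. For a chain of the form $\ast < q_1 < \dots < q_n$, the left-hand link is $\holink{\exit{Q}{Z}}{q_1<\dots<q_n}$ because the join introduces a unique morphism from $\ast$ to every object. On the right, the Segal-type decomposition coming from the décollage condition reduces the chain to the product over the $q_i$-strata of $\holink{\exit{C(Q)}{C(Z)}}{\ast<q_1}$ with $\holink{\exit{Q}{Z}}{q_1<\dots<q_n}$, so the question is reduced to the case $n=1$ of the $\ast<q$ link. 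There, the same rescaling homotopy $[z,t]\leadsto [z,1]$ gives a deformation retraction of the space of exit paths from $\ast$ into $Z_q\times(0,1]$ onto the space parameterized by their endpoint in $Z_q$, identifying the link with $\sing{Z_q}$.

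The main obstacle is the rigorous identification of $\holink{\exit{C(Q)}{C(Z)}}{\ast<q}$ with $\sing{Z_q}$: one must show that the scaling homotopy lifts to a homotopy of exit paths, not merely continuous paths, and that the resulting map is an equivalence of $\infty$-groupoids rather than just a $\pi_0$-bijection. This is essentially a local computation at a conical chart, and the same argument underlies Lurie's proof that conical charts realize the expected local model for $\exit{-}{-}$ in \cite[Appendix A]{lurie2017higher}; I would invoke that local model explicitly to conclude.
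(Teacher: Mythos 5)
Your overall strategy---detecting the equivalence on homotopy links via the d\'ecollage model of \cref{equivdecandsd}---is a legitimate reorganization and genuinely differs from the paper's proof, which checks essential surjectivity and fully faithfulness directly. The two routes in fact perform the same verifications: your stratum check is the paper's essential surjectivity; your chains-in-$Q$ check corresponds to the mapping spaces between non-cone-point objects (which the paper dispatches with ``arguing as above''); and your $\ast<q$ link is, fiberwise over the endpoint, exactly the mapping space $\Hom{\exit{C(Q)}{C(Z)}}{\ast}{[z,1]}$ that the paper's fully-faithfulness step addresses. The d\'ecollage reduction buys cleaner bookkeeping of what must be checked (and the Segal condition correctly reduces you to chains of length at most two), but it does not change the mathematical content.

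The gap sits precisely at the step you flag: the claim that $\text{ev}_1:\holink{\exit{C(Q)}{C(Z)}}{\ast<q}\rightarrow\sing{Z_q}$ is an equivalence, equivalently that $\Hom{\exit{C(Q)}{C(Z)}}{\ast}{[z,1]}$ is contractible. ``The same rescaling homotopy'' is not enough as stated: one must deform an arbitrary exit path $\gamma$ from the cone point to $[z,1]$ onto the straight-line path $s\mapsto[z,s]$ through paths that remain exit paths (leave the cone point immediately) and keep both endpoints fixed, which forces a simultaneous deformation of the $Z$-coordinate and the radial coordinate. This is the only place where the paper does real work, exhibiting the explicit contraction $(t,\gamma)\mapsto\bigl(s\mapsto[\gamma^Z(1-(1-t)(1-s)),\ st+(1-t)\gamma^I(s)]\bigr)$. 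Your fallback appeal to ``Lurie's local model'' is too imprecise to close this; \cite[Appendix A]{lurie2017higher} does not hand you this statement in the form you need, which is presumably why the paper proves it by hand. Supply that homotopy (or a precise reference) and your argument goes through. Two minor points you should also make explicit: the identification of the links over chains contained in $Q$ uses that the full subcategory of $\exit{C(Q)}{C(Z)}$ on objects away from the cone point is $\exit{Q}{Z\times(0,1]}$ (simplices whose vertices lie in $Q$-strata live over chains in $Q$ and hence avoid the cone point), together with invariance of exit paths under the stratified homotopy equivalence $Z\times(0,1]\rightarrow Z$.
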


    \begin{proof}
    	The functor (\ref{joinmapstocone}) is essentially surjective. The cone point is in the essential image, and since any point $[z, t]\in \rnum_{>0}\times Z$ is connected to $[z, 1]$ by a path which doesn't leave the stratum of $[z, t]$, we get the claim. Thus, it remains to prove that (\ref{joinmapstocone}) is fully faithful. Arguing as above, one sees that it is sufficient to prove that, for any $z\in Z$, the map 
    	$$\ast\simeq\Hom{[0]\ast\exit{Q}{Z}}{0}{z}\rightarrow A\coloneqq\Hom{\exit{C(Q)}{C(Z)}}{\ast}{[z, 1]}$$
    	in an isomorphism in $\spaces$. We now prove this by providing an homotopy between the identity of $A$ and the constant map with value the path $s\mapsto[z, s]$. For any exit path $\gamma : I \rightarrow C(Z)$ starting at the cone point and ending at $[z, 1]$ and any $s\in I$, we denote $[\gamma^Z(s), \gamma^I(s)]\coloneqq \gamma(s)$. One checks that the homotopy 
    	$$
    	\begin{tikzcd}[row sep= tiny]
    		I\times A \arrow[r] & A \\
    		(t, \gamma)\arrow[r, mapsto] & s\mapsto{[\gamma^Z(1-(1-t)(1-s)), st + (1-t)\gamma^I(s)]}
    	\end{tikzcd}
    	$$
    	does the job.
    \end{proof}

	Let $s: X\rightarrow P$ be a conically stratified space, $p\in P$ and let $x\in X$ be any point such that $s(x) = p$. Suppose that $U\times C(Z)$ is a conical chart centered in $x$. Then, for any $q\in P$ such that $p\leq q$, one gets a map 
	$$
		\begin{tikzcd}[row sep= tiny]
			\sing{Z_q} \arrow[r, "c_{x}"] &  \holink{\exit{P_{\geq p}}{U\times C(Z)}}{p\leq q} \\
			z \arrow[r, mapsto] & t\mapsto (x, [z, t])
		\end{tikzcd}
	$$
	and commutative squares
	
	\begin{equation}\label{htpyfibev0}
		\begin{tikzcd}
			\sing{Z_q} \arrow[r, "{c_{x}}"] \arrow[d] &  \holink{\exit{P_{\geq p}}{U\times C(Z)}}{p\leq q} \arrow[r] \arrow[d, "{\text{ev}_0}"] & \holink{\exit{P}{X}}{p\leq q} \arrow[d, "{\text{ev}_0}"] \\
			{[0]} \arrow[r, "x"] & \sing{U} \arrow[r] & \sing{X_p}.
		\end{tikzcd}
    \end{equation}

	\begin{proposition}\label{holinksandlocallinks}
		The outer rectangle in (\ref{htpyfibev0}) is a pullback in $\spaces$.
	\end{proposition}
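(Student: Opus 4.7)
The plan is to verify that the outer rectangle is a pullback by factoring it through its two constituent squares and showing each is separately a pullback. The left square involves only the conical chart $U \times C(Z)$ and reduces to an explicit categorical computation based on \cref{exitcone}; the right square captures the open inclusion $U \times C(Z) \hookrightarrow X$ and encodes the locality of exit paths with respect to conical neighbourhoods.

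For the left square, I will apply \cref{exitcone}, together with the fact that exit paths commute with products when one factor is unstratified, to identify
$$\exit{P_{\geq p}}{U \times C(Z)} \simeq \sing{U} \times \bigl([0] \ast \exit{P_{>p}}{Z}\bigr).$$
Under this equivalence, the $p$-stratum corresponds to $\sing{U} \times \{0\}$ and the $q$-stratum to $\sing{U} \times \sing{Z_q}$. By the universal property of the join, the hom-space from $(u, 0)$ to $(u', z)$ decomposes as $\Hom{\sing{U}}{u}{u'}$ times a contractible factor. Computing the homotopy fiber of $\text{ev}_0$ over $x$ in the product-with-join picture then yields $\sing{U}_{x/} \times \sing{Z_q} \simeq \sing{Z_q}$. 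A direct inspection shows that under this identification the map $c_x$ goes over to the identity of $\sing{Z_q}$, so the left square is a pullback.

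For the right square, the content is that for each $u \in \sing{U}$ the induced map on homotopy fibers of $\text{ev}_0$ is an equivalence; that is, the local $q$-link at $u$ computed in $X$ coincides with the one computed inside the conical chart. I plan to obtain this from the locality of exit paths for open embeddings combined with the fact that conical charts form a basis of neighbourhoods: any exit path from $u$ into the $q$-stratum of $X$ admits, up to contractible choice, a short-time restriction inside $U \times C(Z)$ realizing the same morphism in $\exit{P}{X}$, and this straightening is coherent in the endpoint. Formally, this can be extracted from the local structure results in \cite[Appendix A]{lurie2017higher}.

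The hard part is the right square. The left square is a bookkeeping exercise once \cref{exitcone} and the join universal property are in hand, but the right square carries the geometric input of the proposition; care will be needed to ensure that the short-timing of exit paths is performed $\infty$-categorically coherently, rather than pointwise. The cleanest packaging is probably to appeal to the compatibility of $\exit{P}{-}$ with open covers, thereby avoiding any explicit simplicial manipulation of paths.
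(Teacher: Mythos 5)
Your proposal is correct and follows the same overall strategy as the paper: split the outer rectangle into its two constituent squares, handle the right one by locality of exit paths under open embeddings, and handle the left one using \cref{exitcone}. Two remarks. First, you have the difficulty assessment inverted: the right square is not the hard part --- it is precisely the statement of \cite[Proposition A.7.9]{lurie2017higher}, which the paper cites verbatim, so no coherent ``short-timing'' of paths needs to be carried out by hand. Second, for the left square the paper does not compute hom-spaces in the join; it first splits off the $\sing{U}$ factor by writing the cospan $[0]\xrightarrow{x}\sing{U}\leftarrow\holink{\exit{P_{\geq p}}{U\times C(Z)}}{p\leq q}$ as a product of two cospans, reducing to showing that $c_x:\sing{Z_q}\to\holink{\exit{P_{\geq p}}{C(Z)}}{p\leq q}$ is an equivalence over the cone point, and then proves this by a chain of pullback squares that uses only the consequence of \cref{exitcone} that the cone point is an initial object. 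Your more explicit route through the join identification $\exit{P_{\geq p}}{C(Z)}\simeq[0]\ast\exit{P_{>p}}{Z}$ is also valid and arguably more direct, but the step where ``$c_x$ goes over to the identity of $\sing{Z_q}$'' does require unwinding that the equivalence of \cref{exitcone} is induced by the map $(z,t)\mapsto[z,t]$, so that the arrow $0\to z$ in the join is sent exactly to the path $t\mapsto[z,t]$ picked out by $c_x$; this is where the real content of your ``direct inspection'' lives and should be made explicit.
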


    \begin{proof}
    	Since the inclusion $\spaces\hookrightarrow\infcat$ preserves limits, it will suffice to prove that (\ref{htpyfibev0}) is a pullback in $\infcat$. By \cite[Proposition A.7.9]{lurie2017higher}, the right rectangle is a pullback, and so it suffices to prove that the left one is a pullback. Notice that $$\holink{\exit{P_{\geq p}}{U\times C(Z)}}{p\leq q}\simeq \sing{U}\times \holink{\exit{P_{\geq p}}{C(Z)}}{p\leq q}$$ and that the cospan 
    	$$[0]\xrightarrow{x}\sing{U}\xleftarrow{\text{ev}_0}\holink{\exit{P_{\geq p}}{U\times C(Z)}}{p\leq q}$$
    	is obtained as a product of the two cospans
    	$$[0]\xrightarrow{x}\sing{U}\xleftarrow{\text{id}}\sing{U}, [0]\xrightarrow{\text{id}}[0]\leftarrow\holink{\exit{P_{\geq p}}{C(Z)}}{p\leq q}.$$
    	Thus, it suffices to prove that the map 
    	$$\sing{Z_q}\xrightarrow{c_{x}}\holink{\exit{P_{\geq p}}{C(Z)}}{p\leq q}$$
    	is an equivalence, where $x\in C(Z)$ now denotes the cone point. The map $c_x$ is induced by the commutativity of the outer rectangle in the diagram
    	$$
    	\begin{tikzcd}
    		\sing{Z_q} \arrow[r] \arrow[d] & \exit{P_{\geq p}}{C(Z)} \arrow[r, "\simeq"] \arrow[d, "s"] & \exit{P_{\geq p}}{C(Z)}_{x/} \arrow[r] \arrow[d, "s"] & \text{Fun}([1], \exit{P_{\geq p}}{C(Z)}) \arrow[d, "s"] \\
    		{[0]} \arrow[r, "q"] & N(P_{\geq p}) \arrow[r, "\simeq"] & N(P_{\geq p})_{p/}\arrow[r] & \text{Fun}([1], N(P_{\geq p})),
    	\end{tikzcd}
    	$$ 
    	and thus it suffices to prove that the outer rectangle is a pullback in $\infcat$. The left triangle is evidently a pullback, the middle one is a pullback because by \cref{exitcone} $x$ and $p$ are initial objects of $\exit{P_{\geq p}}{C(Z)}$ and $N(P_{\geq p})$ respectively, and thus the horizontal maps are equivalences. We are then only left to show that the right rectangle is a pullback. Consider the following commutative diagram
    	$$
    \begin{tikzcd}
    	& {\exit{P_{\geq p}}{C(Z)}_{x/}} \arrow[rr] \arrow[dd] \arrow[ld] &                                                           & {\Fun([1], \exit{P_{\geq p}}{C(Z)})} \arrow[dd, "\text{ev}_0"] \arrow[ld] \\
    	N(P_{\geq p})_{p/} \arrow[rr] \arrow[dd] &                                                                 & {\Fun([1], N(P_{\geq p}))} \arrow[dd, "\text{ev}_0"] &                                                                                \\
    	& {[0]} \arrow[rr, "x"] \arrow[ld]                             &                                                           & {\exit{P_{\geq p}}{C(Z)}} \arrow[ld]                                           \\
    	{[0]} \arrow[rr, "p"]                 &                                                                 & N(P_{\geq p}).                                            &                                                                               
    \end{tikzcd}
    	$$
    We want to show that the upper horizontal square is a pullback. The front vertical square is a pullback, and so it suffices to show that the composition of the upper horizontal square and the front vertical square is a pullback. But this coincides with the composition of the back vertical square and the lower horizontal square, which are both evidently pullbacks, and so we may conclude.
    \end{proof}

    The following theorem gives a proof to \cite[Conjecture 7.10]{porta2022topological}.

    \begin{theorem}\label{conjecturemauro}
        Let $P$ be any finite poset, and let $X\rightarrow P$ be a conically stratified space. Suppose that $X$ satisfies the following properties.
        \begin{enumerate}[(i)]
            \item For all $p\in P$, $\sing{X_p}$ is a compact (respectively finite) $\infty$-groupoid.
            \item For all $p\in P$, $x\in X_p$ and $q>p$, $\sing{Z_q}$ is a compact (respectively finite) $\infty$-groupoid. Here $Z\rightarrow P_{>p}$ is a stratified space appearing in any conical chart $U\times C(Z)\hookrightarrow X$ of $X$ centered at $x$.
        \end{enumerate}
        Then $\exit{P}{X}$ is a compact (respectively finite) $\infty$-category.
    \end{theorem}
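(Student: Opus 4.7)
The plan is to apply the abstract criterion \cref{cptstra+cptlocallinksiscpt} to the $P$-layered $\infty$-category $\exit{P}{X}\to P$. Since $P$ is finite by assumption, that proposition reduces the theorem to verifying two things: (1) every stratum $\exit{P}{X}_p$ is a compact (respectively finite) $\infty$-groupoid, and (2) for every pair $p<q$ in $P$, the fibers of the evaluation map $\exit{P}{X}[p<q]\to\exit{P}{X}_p$ are compact (respectively finite) $\infty$-groupoids. Once both are established, the criterion delivers the conclusion directly, via the equivalence (in the preliminary lemma) between compactness/finiteness in $\Str{P}$ and in $\infcat$.

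For condition (1), I would invoke the standard identification $\exit{P}{X}_p\simeq\sing{X_p}$, which follows from the construction of the exit paths $\infty$-category in \cite[Appendix A]{lurie2017higher}: the $p$-stratum of $\exit{P}{X}\to P$ is the pullback along $\{p\}\hookrightarrow P$, and this is precisely the singular simplicial set of the topological stratum $X_p$. Hypothesis (i) then immediately gives the desired compactness (respectively finiteness) of each stratum.

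For condition (2), fix $p<q$ in $P$ and a point of $\sing{X_p}$ represented by an actual $x\in X_p$. Since $X\to P$ is conically stratified, there exists a conical chart $U\times C(Z)\hookrightarrow X$ centered at $x$, with $Z\to P_{>p}$ the associated link. By \cref{holinksandlocallinks}, the outer rectangle in diagram \eqref{htpyfibev0} is a pullback in $\spaces$, which exhibits $\sing{Z_q}$ as the fiber of $\exit{P}{X}[p<q]\to\sing{X_p}$ over $x$. Hypothesis (ii) then says this fiber is compact (respectively finite). As the conical stratification provides such a chart centered at every point of every stratum, every fiber of every evaluation map $\exit{P}{X}[p<q]\to\sing{X_p}$ has this form, so condition (2) holds.

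I do not expect a substantive obstacle: the two nontrivial inputs, namely the abstract criterion \cref{cptstra+cptlocallinksiscpt} and the geometric identification of the fiber with $\sing{Z_q}$ in \cref{holinksandlocallinks}, are already in hand, so the argument is essentially an assembly of these with hypotheses (i) and (ii). The only point worth writing out carefully is the passage from ``a point of the $\infty$-groupoid $\sing{X_p}$'' to ``a point of the topological space $X_p$'' so that a conical chart can be chosen; this is routine since $\sing{X_p}$ is the Kan complex of singular simplices of $X_p$ and its vertices are literally points of $X_p$.
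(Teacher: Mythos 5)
Your proposal is correct and follows exactly the paper's own argument: apply \cref{cptstra+cptlocallinksiscpt} and use \cref{holinksandlocallinks} to identify the fibers of $\exit{P}{X}[p<q]\rightarrow\sing{X_p}$ with the $\sing{Z_q}$ from hypothesis (ii). The paper's proof is just a terser version of the same two-step assembly.
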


    \begin{proof}
        By \cref{cptstra+cptlocallinksiscpt}, it suffices to show that $\exit{P}{X}$ has compact (finite) local links and strata. After \cref{holinksandlocallinks}, one sees that these two conditions are equivalent to (i) and (ii) in the statement of the theorem.
    \end{proof}

\subsection{Applications}

    \begin{definition}
        Let $\C$ be any $\infty$-category. We say that $\C$ is \textit{cofinally compact} if the limit functor 
        $$\Fun(\C, \spaces)\xrightarrow{\text{lim}}\spaces$$
        preserves filtered colimits. 
    \end{definition}

    \begin{remark}\label{cpthtpytypeeasychar}
        It's not difficult to show that any compact $\infty$-category is cofinally compact. Moreover, if $\C$ is an $\infty$-groupoid, one can show that $\C$ is compact if and only if it is cofinally compact. However, there are many $\infty$-categories which are cofinally compact, but not compact. For example, take $I$ to be any infinite set, and let $\C$ be the category obtained by adding an initial object to $I$. Since the limit functor on $\C$ is given by evaluating on the initial objects, we see that $\C$ is cofinally compact. However, $\C$ is not a compact $\infty$-category, because $I$ is not compact.
    \end{remark}

    \begin{lemma}\label{cpthausex=>limpreservesfiltcolim}
        Let $X\rightarrow P$ be an conically stratified space. Assume that $X$ is compact Hausdorff, and locally of singular shape. Then  $\exit{P}{X}$ is cofinally compact.
    \end{lemma}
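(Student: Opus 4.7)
The plan is to use Lurie's exodromy theorem to translate the statement into one about global sections of constructible sheaves, and then exploit the compactness and local singular shape of $X$. By \cite[Theorem A.9.3]{lurie2017higher}, the conical stratification yields an equivalence
\[
\Fun(\exit{P}{X}, \spaces) \simeq \cSh{X}{\spaces}
\]
under which the limit functor on the left corresponds to the global sections functor $\Gamma(X; -)$ on the right. Since filtered colimits of constructible sheaves are computed stalkwise and thereby agree with those of the underlying sheaves in $\Sh{X}{\spaces}$, the problem reduces to showing that $\Gamma(X; -)$ preserves filtered colimits when restricted to constructible sheaves.

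Using compactness of $X$ together with the hypothesis that every point admits a neighborhood basis of conical charts, one can cover $X$ by finitely many such charts $V_i \cong U_i \times C(Z_i)$. Refining if necessary using the local singular shape hypothesis, one can moreover assume each $\sing{U_i}$, and the singular shape of each non-empty finite intersection of the $V_i$, is contractible. Cech hyperdescent for this finite cover then expresses $\Gamma(X; F)$ as the limit of a cosimplicial diagram whose non-degenerate part is bounded by the cardinality of the cover; this amounts to a finite limit in $\spaces$ and therefore commutes with filtered colimits, provided the sections over each term of the Cech nerve do.

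For a single conical chart $V = U \times C(Z)$ with $\sing{U}$ contractible, the product decomposition $\exit{P|_V}{V} \simeq \sing{U} \times ([0] \ast \exit{P_{>p}}{Z})$ provided by \cref{exitcone} exhibits an initial object (a point of $U$ paired with the cone point of $C(Z)$). Limits over an $\infty$-category with an initial object are computed by evaluation at that object and hence trivially preserve filtered colimits. A similar argument then handles each finite intersection appearing in the Cech nerve, once the cover has been refined so that every such intersection has contractible singular shape in each strata direction.

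The main obstacle I anticipate is the treatment of finite intersections of conical charts, which are not themselves conical charts in general. Resolving this will likely require either a careful partition-of-unity-style refinement of the cover (ensuring that all non-empty finite intersections remain conically stratified with contractible singular shape on the base factor) or, alternatively, an induction on the depth of the stratification which reduces the statement about sections over intersections to lower-depth instances of the same lemma.
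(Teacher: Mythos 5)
Your first step---using the exodromy equivalence of \cite[Theorem A.9.3]{lurie2017higher} to identify the limit functor over $\exit{P}{X}$ with the global sections functor on constructible sheaves---is exactly the paper's first step. Where you diverge is in establishing that $\Gamma(X;-)$ preserves filtered colimits: the paper simply invokes \cite[Corollary 7.3.4.12]{lurie2009higher}, which asserts this for sheaves on any compact Hausdorff space, whereas you attempt to reprove that statement by \v{C}ech descent along a finite cover by conical charts. (A side remark: both arguments implicitly also need that the essential image of $\Fun(\exit{P}{X},\spaces)$ is closed under filtered colimits in $\Shsp{X}$; ``filtered colimits are computed stalkwise'' is not by itself the right justification for that, though it is not the main issue.)

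The \v{C}ech route has a genuine gap at exactly the point you flag: the finite intersections $V_S$ of the charts. These are open, hence non-compact, conically stratified subspaces, and there is no reason for $\Gamma(V_S;-)$---equivalently, by exodromy applied to $V_S$, for the limit over $\exit{P}{V_S}$---to preserve filtered colimits. Cofinal compactness genuinely uses compactness: an open subset of a compact Hausdorff space can have infinitely many contractible components (already the intersection of two topological disks in a surface can), and the limit over an infinite discrete $\infty$-category is an infinite product, which does not commute with filtered colimits in $\spaces$. Neither of your proposed fixes closes this. Induction on depth does not apply, because the $V_S$ are not compact and hence are not instances of the lemma; and refining the cover so that every nonempty finite intersection has contractible strata-wise shape is asking for a stratified ``good cover'', whose existence is a genuinely geometric input (available for smooth manifolds via convexity, but not known for general compact Hausdorff conically stratified spaces, and certainly not supplied by the hypotheses here). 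The compactness of $X$ has to enter through a point-set argument, which is precisely what Lurie's \cite[Corollary 7.3.4.12]{lurie2009higher} (proved via $\mathcal{K}$-sheaves on the poset of compact subsets) provides; citing it, as the paper does, is the efficient way to finish.
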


    \begin{proof}
        Since $X$ is compact and Hausdorff, by \cite[Corollary 7.3.4.12]{lurie2009higher} we know that the global section functor preserves filtered colimits. The lemma is then proven by observing that, via the exodromy equivalence (see \cite[Theorem A.9.3]{lurie2017higher}), the global section functor corresponds to taking the limit indexed by $\exit{P}{X}$.
    \end{proof}

    \begin{corollary}\label{cpthausex=>cptstrata}
        Let $X\rightarrow P$ be a conically stratified space, and assume that $X$ is compact and Hausdorff. Moreover, assume that $\exit{P}{X}\rightarrow P$ has compact local links. Then, for each $p\in P$, $\sing{X_p}$ is a compact object in $\spaces$.
    \end{corollary}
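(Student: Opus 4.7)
The plan is to combine the three tools developed earlier in the paper. First, since $X$ is compact Hausdorff and conically stratified (and hence locally of singular shape), \cref{cpthausex=>limpreservesfiltcolim} applies, so $\exit{P}{X}$ is cofinally compact; that is, the limit functor
$$\lim_{\exit{P}{X}}\colon\Fun(\exit{P}{X},\spaces)\longrightarrow\spaces$$
preserves filtered colimits. Second, the compact local links hypothesis lets us invoke \cref{cptloclink=>cptrestrtostrata}: writing $j\colon\sing{X_p}\hookrightarrow\exit{P}{X}$ for the inclusion of the $p$-stratum, the right Kan extension functor $\pf{j}\colon\Fun(\sing{X_p},\spaces)\to\Fun(\exit{P}{X},\spaces)$ preserves filtered colimits.

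The key observation is then the factorization
$$\lim_{\sing{X_p}}\;\simeq\;\lim_{\exit{P}{X}}\circ\,\pf{j},$$
which comes from the fact that a limit is itself a right Kan extension to the point, so right Kan extensions compose. Combining the two previous paragraphs, the composite on the right preserves filtered colimits, hence $\lim_{\sing{X_p}}$ preserves filtered colimits as well. In other words, $\sing{X_p}$ is cofinally compact.

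Finally, $\sing{X_p}$ is an $\infty$-groupoid, and by \cref{cpthtpytypeeasychar} an $\infty$-groupoid is compact if and only if it is cofinally compact; this yields the desired compactness of $\sing{X_p}$ in $\spaces$. I do not anticipate a serious obstacle here: the only subtle point is the identification $\lim_{\sing{X_p}}\simeq\lim_{\exit{P}{X}}\circ\pf{j}$, which is just the composition formula for right Kan extensions, and the mild check that conical stratification of a compact Hausdorff $X$ is enough to apply \cref{cpthausex=>limpreservesfiltcolim} (conically stratified spaces being automatically locally of singular shape).
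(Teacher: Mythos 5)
Your proposal is correct and follows essentially the same route as the paper: factor $\lim_{\sing{X_p}}$ as $\lim_{\exit{P}{X}}\circ\pf{j}$, apply \cref{cpthausex=>limpreservesfiltcolim} and \cref{cptloclink=>cptrestrtostrata} to see that both functors preserve filtered colimits, and conclude via \cref{cpthtpytypeeasychar}. The only difference is that you make explicit the composition formula for right Kan extensions and the locally-of-singular-shape hypothesis, both of which the paper leaves implicit.
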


    \begin{proof}
        By \cref{cpthtpytypeeasychar}, $\sing{X_p}$ is a compact object in $\spaces$ if and only the limit functor preserves filtered colimits. If $j:\sing{X_p}\rightarrow\exit{P}{X}$ is the inclusion, then we can factor the limit functor on $\sing{X_p}$ as
        $$\Fun(\sing{X_p},\spaces)\xrightarrow[]{\pf{j}}\Fun(\exit{P}{X},\spaces)\xrightarrow[]{\text{lim}}\spaces.$$
        But by \cref{cptloclink=>cptrestrtostrata} and \cref{cpthausex=>limpreservesfiltcolim} both functors preserve filtered colimits, and therefore we may conclude.
    \end{proof}

    \begin{theorem}\label{cptC0strat=>sratacpt}
    	Let $X\rightarrow P$ be a compact $C^0$-stratified topological space. Then $\exit{P}{X}$ is a compact object in $\infcat$. Moreover, when $X$ is equipped with a conically smooth structure (e.g. a Whitney stratified space) $\exit{P}{X}$ is finite.
    \end{theorem}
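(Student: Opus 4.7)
The plan is to establish compactness of $\exit{P}{X}$ by induction on the depth of the $C^0$-stratification of $X \to P$. The finiteness claim in the conically smooth case is already proven as \cite[Proposition 2.19]{volpe2022verdier}, which I would simply invoke.

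The heart of the argument is the inductive step, which reduces to verifying the two hypotheses of \cref{conjecturemauro} for $X\to P$: compactness of each stratum $\sing{X_p}$, and compactness of each link stratum $\sing{Z_q}$ appearing in a conical chart $U\times C(Z)\hookrightarrow X$ centered at some $x\in X_p$. For the link strata, I use that $Z$ is itself a compact $C^0$-stratified space of strictly smaller depth. By the inductive hypothesis, $\exit{P_{>p}}{Z}$ is then compact, and \cref{restrstratapeter} yields that each $\sing{Z_q}$ is compact. Via \cref{holinksandlocallinks}, this says exactly that $\exit{P}{X}$ has compact local links. With this in hand, \cref{cpthausex=>cptstrata} applies (using that $X$ is compact Hausdorff and conically stratified) to give compactness of every $\sing{X_p}$. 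Both hypotheses of \cref{conjecturemauro} are now met, and the conclusion follows.

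The base case is a single-stratum $C^0$-stratified space, i.e., a compact topological manifold $X$. The link condition is vacuous, and $\sing{X}$ is a compact $\infty$-groupoid since every compact topological manifold (more generally, any compact ANR, by West's theorem) has the weak homotopy type of a finite CW complex, hence is a compact object in $\spaces$.

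The principal obstacle I expect is not analytic but definitional: one needs to unpack the definition of $C^0$-stratification from \cite{ayala2017local} to confirm that (i) the link $Z$ arising in any conical chart of a compact $C^0$-stratified space is itself a compact $C^0$-stratified space over $P_{>p}$, (ii) the depth of $Z$ is strictly smaller than that of $X$, so that the induction on depth is well-founded, and (iii) for a compact $C^0$-stratified space the indexing poset $P$ may be assumed finite, as required to apply \cref{conjecturemauro}. Once these points are spelled out, the rest of the proof is a direct assembly of results already established earlier in the paper.
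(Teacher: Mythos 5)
Your proposal is correct and follows essentially the same route as the paper: induction on the stratification (the paper inducts on the cardinality of $P$ rather than on depth, but this is immaterial), with the base case handled by West's theorem, the local links controlled via the inductive hypothesis together with \cref{restrstratapeter} and \cref{holinksandlocallinks}, and the strata handled by \cref{cpthausex=>cptstrata}. The only divergence is that for the conically smooth finiteness claim you cite \cite[Proposition 2.19]{volpe2022verdier} outright, whereas the paper, after acknowledging that reference suffices, also sketches a more self-contained argument via resolution of singularities and collarings of corners.
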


    \begin{proof}
        By \cite[Theorem A.9.3]{lurie2017higher}, we know that $X\rightarrow P$ is exodromic. Therefore it makes sense to consider $\exit{P}{X}$.
    	Notice that the compactness of $X$ forces $P$ to be finite (see \cite[Lemma 3.1]{volpe2022verdier}). We proceed by induction on the cardinality of $P$. 
     
        Denote by $d$ the cardinality of $P$. If $d=1$, then $X$ is a compact topological manifold. Using for example \cite{west1977mapping}, we know that its homotopy type is finite, and therefore compact. 

        Assume that $d>1$. By \cite[Lemma 2.2.2]{ayala2017local}, we know that $X$ has a basis given by open subsets which are isomorphic as stratified spaces to $\rnum^n\times C(Z)$, where $Z$ is a compact $C^0$-stratified space whose stratified poset $Q$ has cardinality smaller than $d$. Therefore, by the inductive assumption $\exit{Q}{Z}$ is compact, and hence all its strata are compact by \cref{restrstratapeter}. By \cref{holinksandlocallinks}, we get that $\exit{P}{X}$ has compact local links. Thus, by \cref{cptstra+cptlocallinksiscpt} it suffices to show that the strata of $X$ have compact homotopy type. But this follows immediately from \cref{cpthausex=>cptstrata}.

        Let us now assume that $X$ is equipped with a conically smooth atlas. In this situation, finiteness of $\exit{P}{X}$ has already been proven in \cite[Proposition 2.19]{volpe2022verdier}. Here we provide a more direct argument, relying on the results of the previous section. Observe that, arguing as in the $C^0$ case, it suffices to prove that any open stratum $U$ in $X$ has the homotopy type of a finite CW-complex. By resolution of singularities (see \cite[Proposition 7.3.10]{ayala2017local}), we know that $U$ is the interior of a compact smooth manifold with corners $Y$. A routine application of the existence of collarings of corners shows that $U$ is homotopy equivalent to $Y$. Therefore, one deduce finiteness of $U$ from the finiteness of $Y$. 
    \end{proof}

    We deduce the following corollary.

    \begin{corollary}\label{cpt/finstrata=>cprfinexitinC0/smooth}
        Let $P$ be a finite poset, and let $X\rightarrow P$ be a $C^0$-stratified space. Then $\exit{P}{X}$ is compact if and only if, for all $p\in P$, $\sing{X_p}$ is compact. Moreover, when $X$ is equipped with a conically smooth structure, we have that $\exit{P}{X}$ is finite if and only if, for all $p\in P$, $\sing{X_p}$ is finite.
    \end{corollary}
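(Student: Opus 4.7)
The plan is to assemble the result from three ingredients already at our disposal: \cref{conjecturemauro}, \cref{cptC0strat=>sratacpt}, and \cref{restrstratapeter}. Both biconditionals will be proved simultaneously in the two cases.

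For the ``only if'' directions, assume $\exit{P}{X}$ is compact (respectively finite). Under the exodromy correspondence, the $p$-stratum of $\exit{P}{X}$ is $\sing{X_p}$, so \cref{restrstratapeter} immediately gives that $\sing{X_p}$ is compact (respectively finite) for every $p\in P$. This direction does not use the $C^0$ or conically smooth hypothesis.

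For the ``if'' directions, the strategy is to verify the two hypotheses of \cref{conjecturemauro}. Condition~(i) is precisely our assumption on the strata. For condition~(ii), fix $p\in P$, $x\in X_p$, $q>p$, and a conical chart $U\times C(Z)\hookrightarrow X$ centered at $x$. The key structural input from \cite{ayala2017local} is that $Z\to P_{>p}$ is itself a compact $C^0$-stratified space (respectively a compact conically smooth stratified space), stratified over a poset of cardinality strictly less than $|P|$. Applying \cref{cptC0strat=>sratacpt} to $Z\to P_{>p}$ yields that $\exit{P_{>p}}{Z}$ is compact (respectively finite). Then \cref{restrstratapeter} applied to $\exit{P_{>p}}{Z}$ gives that each stratum $\sing{Z_q}\simeq\exit{P_{>p}}{Z}_q$ is compact (respectively finite), which is exactly condition~(ii). \cref{conjecturemauro} then yields compactness (respectively finiteness) of $\exit{P}{X}$.

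There is no genuine obstacle: the one point to handle with care is the invocation of the structural property of conical charts in the $C^0$ and conically smooth settings, namely that the link space $Z$ appearing in a chart is itself compact and inherits a $C^0$-stratification (respectively conically smooth atlas) over $P_{>p}$. This is the same ingredient already exploited in the inductive step of \cref{cptC0strat=>sratacpt}, so the corollary follows with no additional work.
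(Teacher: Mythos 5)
Your argument is correct and follows essentially the same route as the paper: \cref{restrstratapeter} for the forward implications, and for the converse, verifying the hypotheses of \cref{conjecturemauro} by applying \cref{cptC0strat=>sratacpt} to the compact $C^0$-stratified (respectively conically smooth) link $Z$ appearing in a conical chart. The only difference is that you spell out the intermediate step of passing from compactness of $\exit{P_{>p}}{Z}$ to compactness of its strata via \cref{restrstratapeter}, which the paper leaves implicit.
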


    \begin{proof}
        By \cref{restrstratapeter}, we know that compactness (finiteness) of $\exit{P}{X}$ implies compactness (finiteness) of $\sing{X_p}$ for all $p\in P$. Therefore, assume that a $C^0$-stratified space with the property that $\sing{X_p}$ is compact for all $p\in P$. We want to show that $\exit{P}{X}$ is compact. By \cref{conjecturemauro}, it suffices to prove that $\exit{P}{X}$ has compact local links. But this follows immediately from \cref{cptC0strat=>sratacpt}, since $X$ has conical charts of the form $\rnum^n\times C(Z)$, where $Z\rightarrow Q$ is a compact $C^0$-stratified space. The last assertion of the corollary is deduced analogously, by observing that the presence of a conically smooth atlas provides conical charts as above where $Z$ is itself equipped with a conically smooth atlas.
    \end{proof}

    \section{The case of compact $C^0$-stratifications}

    Let $X\rightarrow P$ be a compact $C^0$-stratified space. It is natural to wonder whether the finiteness of $\exit{P}{X}$ may be obtained regardless of the presence of a conically smooth atlas for $X$. Our argument for finiteness of $\exit{P}{X}$ in \cref{cptC0strat=>sratacpt} in the case of $X$ compact and conically smooth relies on the existence of certain blow-ups. Such blow-ups should not be expected to exist when working in the topological category (see \cite{kupers2020there}). Therefore, there is no a priori reason why one should expect to have finiteness of $\exit{P}{X}$ when a conically smooth atlas is not given to $X$. In the rest of this section, we provide an example of a compact $C^0$-stratified space $X\rightarrow P$ with the property that $\exit{P}{X}$ is compact but not finite. The space we consider is due to Quinn (see \cite{quinn1982ends}). We deduce that this stratified space does not admit any conically smooth structure compatible with its stratification.

    We start by considering a special class of $C^0$-stratified spaces, not necessarily smoothable, whose exit path $\infty$-category is nevertheless finite.

    \begin{proposition}\label{finiteexitlocallyflat}
        Let $X$ be a compact topological manifold, and let $Y$ be $k$-dimensional closed locally flat submanifold of $X$. Let $[1]$ be the poset with two elements $\{0<1\}$. Consider the stratification $X\rightarrow [1]$, whose initial stratum is given by $Y$. Then $X\rightarrow [1]$ is $C^0$-stratified, and $\exit{[1]}{X}$ is finite.
    \end{proposition}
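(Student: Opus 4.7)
The plan is to apply the abstract criterion \cref{cptstra+cptlocallinksiscpt} to the $[1]$-layered $\infty$-category $\exit{[1]}{X}\to[1]$, thereby reducing finiteness of $\exit{[1]}{X}$ to the separate finiteness of the strata and of the local links.

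First I would verify the $C^0$-structure and compute the local links. Local flatness of $Y$ in $X$ provides, around each $y\in Y$, a chart $U\cong\mathbb{R}^k\times\mathbb{R}^{n-k}$ sending $Y\cap U$ to $\mathbb{R}^k\times\{0\}$. Identifying $\mathbb{R}^{n-k}$ with the open cone $C(S^{n-k-1})$ realizes this as a conical chart $\mathbb{R}^k\times C(S^{n-k-1})$ whose trivially-stratified link $S^{n-k-1}$ is $C^0$; hence $X\to[1]$ is $C^0$-stratified. By \cref{holinksandlocallinks}, the fiber of $\exit{[1]}{X}[0<1]\to\sing{Y}$ over any $y\in Y$ is then $\sing{S^{n-k-1}}$, a finite $\infty$-groupoid, so $\exit{[1]}{X}$ has finite local links.

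Next I would handle the two strata. The closed stratum $\sing{Y}$ is finite since $Y$ is a compact topological manifold (e.g.\ by \cite{west1977mapping}, as invoked in the proof of \cref{cptC0strat=>sratacpt}). For the open stratum $\sing{X\setminus Y}$ I would directly invoke the main result of \cite{ho2024complements}, which asserts that the complement of a closed locally flat submanifold of a compact topological manifold has the weak homotopy type of a finite CW-complex. With the strata and the local links both finite, \cref{cptstra+cptlocallinksiscpt} immediately yields finiteness of $\exit{[1]}{X}$.

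The main obstacle is the finiteness of $\sing{X\setminus Y}$. In the conically smooth setting this was handled by resolution of singularities together with a collaring-of-corners argument (as in the second half of the proof of \cref{cptC0strat=>sratacpt}), but in the purely topological category no analogous resolution is available, so one genuinely needs the substantial controlled-topology input of \cite{ho2024complements}. Everything else in the argument is a straightforward unpacking of what the local-flatness hypothesis gives and a direct invocation of the abstract criterion already established.
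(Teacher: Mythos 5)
Your proposal is correct and takes essentially the same route as the paper: the paper likewise extracts conical charts $\rnum^k\times C(S^{l-1})$ from local flatness, notes that the links are spheres and that the closed stratum $Y$ is a compact manifold (citing \cite{west1977mapping}), and cites \cite{ho2024complements} for the finiteness of $X\setminus Y$. The only cosmetic difference is that the paper invokes the packaged \cref{conjecturemauro}, whereas you apply its two ingredients, \cref{cptstra+cptlocallinksiscpt} and \cref{holinksandlocallinks}, directly.
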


    \begin{proof}
        By the assumption of local flatness of $Y$, around each point of $Y$ one may find a euclidean chart of $X$ of the form $\rnum^k\times \rnum^l\hookrightarrow X$, where $\rnum^k\times\{0\}$ is mapped onto an open subset of $Y$. Therefore, one obtains an open embedding $$\rnum^k\times C(S^{l-1})\cong\rnum^k\times \rnum^l\hookrightarrow X$$ which respects the $[1]$ stratification. Hence we deduce that $X\rightarrow[1]$ is $C^0$-stratified.

        We want to show that $\exit{[1]}{X}$ is finite. The $0$-stratum $X_0 = Y$ is a compact topological manifold, and therefore homotopy equivalent to a finite CW-complex (see for example \cite{west1977mapping}). Moreover, the conical charts provided above show that the local links around the points in $X_0$ are spheres. Hence, by \cref{conjecturemauro}, we are only left to show that that $X_1 = X\setminus Y$ has the homotopy type of a finite CW-complex. This is proven in \cite{ho2024complements}.
    \end{proof}

    \subsection{The orbit type stratification}

The goal of this subsection is to recall the definition of the \textit{orbit-type stratification} on spaces equipped with a continuous $G$-action. We specifically define it in terms of a continuous map to the poset of closed subgroups of $G$, equipped with the Alexandroff topology.

Let $G$ be any topological group, and let $X$ be any topological space equipped with a continuous $G$ action. Recall that, for any $x\in X$, the \textit{isotropy group} of $x$, denoted by $G_x$, is defined to be the subgroup of $G$ given by the elements that fix the point $x$. Equivalently, $G_x$ is the fiber at the point $(x, x)\in X\times X$ of the continuous function
$$
\begin{tikzcd}[row sep = tiny]
    G\times X\arrow[r] & X\times X \\
    (g, x)\arrow[r, mapsto] & (gx, x).
\end{tikzcd}
$$
Therefore, if $X$ is $T_1$, one can define a function of sets
\begin{equation}\label{isotropymap}
   \begin{tikzcd}[row sep = tiny]
    X^{\delta}\arrow[r, "\text{iso}"] & \Cl{G} \\
    x\arrow[r, mapsto] & G_x.
   \end{tikzcd} 
\end{equation}
Above $X^{\delta}$ denotes the underlying set of $X$ equipped with the discrete topology, and $\Cl{G}$ denotes the set of closed subgroups of $G$.

\begin{lemma}
    Suppose that $X$ is Hausdorff. Then the map (\ref{isotropymap}) promotes to a continuous function
    $$X\xrightarrow{\text{iso}}\Cl{G}\op.$$
    Here $\Cl{G}\op$ denotes the set $\Cl{G}$ equipped with the Alexandroff topology associated to the ordering given by reverse inclusion.
\end{lemma}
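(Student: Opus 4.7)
The plan is to verify continuity of $\text{iso}$ by checking that each preimage of a basic open set in $\Cl{G}\op$ is open in $X$. With the reverse inclusion order on $\Cl{G}$, the Alexandroff topology is generated by the principal upper sets $U_{H_0} := \{H \in \Cl{G} : H \subseteq H_0\}$ indexed by $H_0 \in \Cl{G}$, so it suffices to show each $\text{iso}^{-1}(U_{H_0}) = \{x \in X : G_x \subseteq H_0\}$ is open in $X$. Equivalently, the complement $A_{H_0} := \bigcup_{g \in G \setminus H_0} \text{Fix}(g)$ must be closed, where $\text{Fix}(g) := \{x \in X : gx = x\}$ is the fixed-point set of the element $g$.

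The Hausdorff hypothesis on $X$ enters through the global fixed-locus $E := \{(g, x) \in G \times X : gx = x\}$, which is the preimage of the diagonal $\Delta_X \subseteq X \times X$ under the continuous map $G \times X \to X \times X$, $(g, x) \mapsto (gx, x)$. Since $X$ is Hausdorff, $\Delta_X$ is closed, so $E$ is closed in $G \times X$. Slicing over a fixed $g \in G$ then gives that each $\text{Fix}(g) = E \cap (\{g\} \times X)$ is closed in $X$.

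The main step, which I expect to be the principal obstacle, is to upgrade this pointwise closedness of the $\text{Fix}(g)$ to closedness of the union $A_{H_0}$; equivalently, to verify that the projection $\pi : G \times X \to X$ sends the relatively closed subset $E \cap ((G \setminus H_0) \times X)$ to a closed subset of $X$. The natural approach is a net argument: given a convergent net $x_\alpha \to x$ in $A_{H_0}$ with witnesses $g_\alpha \in G_{x_\alpha} \setminus H_0$, one extracts a subnet along which $g_\alpha \to g$ in $G$; continuity of the action then forces $g x = \lim g_\alpha x_\alpha = \lim x_\alpha = x$, and since $H_0$ is closed the limit $g$ remains in $G \setminus H_0$, so $x$ lies in $A_{H_0}$. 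The delicate point is the extraction of this convergent subnet, since projections of closed sets are not automatically closed; this is the step where the topological structure on $G$ and the continuity of the action must be carefully exploited.
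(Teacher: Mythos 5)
Your reduction of continuity to the openness of $\text{iso}^{-1}(\{K \in \Cl{G} : K \subseteq H_0\})$, i.e.\ to the closedness of $A_{H_0} = \bigcup_{g \in G \setminus H_0}\mathrm{Fix}(g)$, is the literal unraveling of continuity into an Alexandroff space (preimages of the minimal open neighbourhoods must be open), and the first half of your argument --- each $\mathrm{Fix}(g)$ is closed because $X$ is Hausdorff --- is exactly the paper's key observation. But the step you yourself flag as the main obstacle is left unproven, and the net argument you sketch for it does not work. First, there is no mechanism for extracting a convergent subnet of the witnesses $g_\alpha$: $G$ is an arbitrary topological group and the action is merely continuous, so nothing prevents the $g_\alpha$ from escaping to infinity; some compactness of $G$ or properness of the action would be needed, and neither is assumed. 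Second, even granting a limit $g_\alpha \to g$, your assertion that ``since $H_0$ is closed the limit $g$ remains in $G\setminus H_0$'' is backwards: closedness of $H_0$ makes $G\setminus H_0$ \emph{open}, so a net in $G\setminus H_0$ can perfectly well converge into $H_0$ (take $G=\mathbb{R}$, $H_0=\{0\}$, $g_\alpha = 1/\alpha$). So the proposal does not establish the lemma.

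For comparison, the paper takes a different and much shorter route: it asserts that continuity amounts to the closedness of $X_{\leq H}=\{x\in X : G_x\supseteq H\}$ for every closed subgroup $H$, and this set is the \emph{intersection} $\bigcap_{g\in H}\mathrm{Fix}(g)$ of closed sets, so no union problem ever arises. Note, however, that $X_{\leq H}$ is the preimage of the principal down-set of $H$ (the closure of the point $H$ in $\Cl{G}\op$); closedness of these preimages is necessary for continuity, but when $\Cl{G}$ is infinite it is not obviously sufficient, since a general closed subset of $\Cl{G}\op$ is an arbitrary union of principal down-sets. Your criterion is the one that is actually equivalent to continuity, and the difficulty you ran into is genuine: for a general continuous action of a topological group on a Hausdorff space, $\{x : G_x\subseteq H_0\}$ need not be open (one can arrange a continuous $\mathbb{Z}$-action in which orbits with stabilizer $n\mathbb{Z}$ accumulate onto a free orbit, so the free locus fails to be open). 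Rather than a missing trick, your attempt has surfaced the fact that the statement appears to need additional hypotheses (e.g.\ a proper action of a Lie group, which holds in the paper's intended application to locally smooth actions); this is worth raising with the author rather than trying to repair the net argument as written.
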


\begin{proof}
    Let $H$ be any closed subgroup of $G$. Unraveling the definition of the Alexandroff topology, one sees that proving the lemma amounts to showing that 
    $$X_{\leq H}\coloneqq\{x\in X\mid G_x\supseteq H\}$$
    is a closed subset of $X$. For a fixed $g$, the set 
    $$\text{eq}(g, e) \coloneqq \{x\in X\mid gx = x\}$$
    is the preimage of the diagonal under the continuous map
    $$
    \begin{tikzcd}[row sep = tiny]
        X\arrow[r] & X\times X \\
        x\arrow[r, mapsto] & (gx,  x).
    \end{tikzcd}
    $$
    Since $X$ is Hausdorff, we deduce that $\text{eq}(g, e)$ is a closed subset of $X$. The proof is concluded by observing that we have the equality
    $$\{x\in X\mid G_x\supseteq H\} = \bigcap_{g\in H}\text{eq}(g, e).$$
    \end{proof}

\begin{definition}
    Let $G$ be any topological group, and let $X$ be any Hausdorff topological space equipped with a continuous $G$-action. We define the \textit{orbit type stratification}  of $X$ to be the continuous map $X\xrightarrow{\text{iso}}\Cl{G}\op$. For each $H\in\Cl{G}$, the corresponding stratum will be denoted by $X_H$. 
\end{definition}

The set $\Cl{G}$ admits a natural $G^{\delta}$-action, given by conjugation. Since conjugation respects inclusions, the set of orbits $\Cl{G}/G^{\delta}$ has induced ordering, and the quotient map $\Cl{G}\rightarrow\Cl{G}/G^{\delta}$ is order preserving. We denote by $\Cl{G}\op/G$ the topological space whose underlying set is $\Cl{G}/G^{\delta}$, equipped with the Alexandroff topology associated with the ordering induces by the reverse inclusion. By the functoriality of the Alexandroff topology, the quotient map $\Cl{G}\op\rightarrow\Cl{G}\op/G$ is continuous.

The composition
$$X\xrightarrow{\text{iso}}\Cl{G}\op\rightarrow\Cl{G}\op/G$$
is evidently constant on orbits, and therefore one obtains a induced map 
\begin{equation}\label{isotropymap/G}
    X/G\xrightarrow{\text{iso}/G}\Cl{G}\op/G.
\end{equation}

\begin{definition}
    Let $G$ be any topological group, and let $X$ be any Hausdorff topological space equipped with a continuous $G$-action. We define the \textit{orbit type stratification}  of $X/G$ to be the continuous map (\ref{isotropymap/G}). For each orbit $(H)\in\Cl{G}/G$, the corresponding stratum will be denoted by $X_{(H)}$. 
\end{definition}

\subsection{Locally smooth actions}

In this subsection, we consider the orbit type stratification associated with a specific family of actions, called \textit{locally smooth actions (with boundary)}. Our main result is that these induce a $C^0$-stratification on the orbit space. This result is not new, as it could be deduced from example from \cite{popper2000compact}. For the reader's convenience, we include a list of the main definitions involved, and a short sketch of proof. We first recall basic facts and definitions about group actions. Standard references for the subject are for example \cite{bredon1972introduction}, \cite{palais1961existence}.

For $n\in\mathbb{N}$, denote by $\rnum^n_+$ the half-space $\rnum_{\geq 0}\times\rnum^{n-1}$ seen as the subspace $\{x_1\geq 0\}\subset\rnum^n$. 

\begin{definition}
    Let $G$ be any topological group. An \textit{orthogonal action} of $G$ on $\rnum^n$ is a continuous action of $G$ on $\rnum^n$ which factors through $O(n)\subset Top(n)$. An orthogonal action of $G$ on $\rnum^n_+$ is an orthogonal action of $G$ on $\rnum^n$ which restricts to the half space $\rnum^n_+$.
\end{definition}

\begin{remark}\label{orthactionfixx1}
    Notice that an orthogonal action on $\rnum^n$ restricts to $\rnum^n_+$ if and only if it fixes the $x_1$-axis. Therefore, such an action restricts to the hyperplane $\{x_1=0\}$.
\end{remark}

\begin{definition}
    Let $X$ be a space with $G$-action, and let $H$ be a closed subgroup of $G$. The \textit{twisted product} $G\times_H X$ is defined to be the orbit space of $G\times X$ under the $H$ action $(g, x, h)\mapsto (gh^{-1}, hx)$.
\end{definition}

\begin{definition}
    Let $G$ be a topological group. A \textit{linear tube} (respectively, a \textit{linear tube with boundary}) is a $G$-space of the form $G\times_H \rnum^n$ (respectively, $G\times_H \rnum^n_+$), for some orthogonal $G$-action on $\rnum^n$ (respectively, on $\rnum^n_+$).
\end{definition}

    \begin{definition}
        Let $G$ be a Lie group, and let $X$ be any Hausdorff topological space. A \textit{locally smooth action with boundary on $X$} is a proper $G$-action on $X$ with the property that, for any orbit $P$ of type $G/H$, there exists a $G$-equivariant open embedding
        $U\hookrightarrow X$
        onto a neighbourhood of $P$, where $U$ is either a linear tube or a linear tube with boundary.
    \end{definition}

    \begin{remark}\label{locsmoothactismanifold}
        One can show that the map $G\times_H X\rightarrow G/H$ is a fiber bundle with fiber $X$ (see \cite[Theorem 2.4]{bredon1972introduction} applied to the $H$-torsor $G\rightarrow G/H$). As a consequence, if $X$ is equipped with a locally smooth $G$-action with boundary, then $X$ is a topological manifold with boundary.
    \end{remark}

    \begin{lemma}
        Let $X$ be a topological space equipped with a locally smooth $G$-action with boundary. Then the $G$-action on $X$ restricts to $\partial X$. In particular, we get a continuous map $X/G\xrightarrow{\partial} [1]$ whose fiber over $0\in[1]$ is $\partial X / G$.
    \end{lemma}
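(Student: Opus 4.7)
The plan is to first establish that $\partial X$ is a $G$-invariant subset of $X$, and then to descend the indicator map $X\to[1]$ of the closed subset $\partial X$ to a continuous map on the quotient. By \cref{locsmoothactismanifold}, $X$ is already a topological manifold with boundary, so the topological boundary $\partial X$ is well-defined and closed in $X$; only $G$-invariance and the quotient step remain.

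For $G$-invariance, the cleanest route I would take is to invoke invariance of the boundary: every $g\in G$ acts on $X$ by a homeomorphism, and homeomorphisms of topological manifolds with boundary preserve the boundary (a consequence of invariance of domain). Hence $g\cdot\partial X=\partial X$ for all $g\in G$. Alternatively, and more in keeping with the setup, one can verify this locally inside the $G$-equivariant charts: any $x\in\partial X$ must lie in a chart of the form $G\times_H\rnum^n_+$ (charts of the form $G\times_H\rnum^n$ being topological manifolds without boundary by \cref{locsmoothactismanifold}), and the boundary within such a chart is $G\times_H\{x_1=0\}$. By \cref{orthactionfixx1}, the orthogonal $H$-action on $\rnum^n_+$ preserves the hyperplane $\{x_1=0\}$, so this subset is $G$-invariant, yielding $G\cdot(\partial X\cap U)\subseteq \partial X\cap U$ in each such chart.

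With $G$-invariance in hand, I would define the indicator function $\chi:X\to[1]$ by $\chi(\partial X)=\{0\}$ and $\chi(X\setminus\partial X)=\{1\}$. This is continuous for the Alexandroff topology on $[1]$, since the open set $\{1\}\subseteq[1]$ pulls back to the open subset $X\setminus\partial X$, using that $\partial X$ is closed in $X$. Because $\chi$ is $G$-invariant, it factors through the quotient map $X\to X/G$ to produce a continuous map $\partial:X/G\to[1]$ by the universal property of the quotient topology. The fiber $\partial^{-1}(0)$ is the image of $\partial X$ in $X/G$, and since $\partial X$ is a closed $G$-saturated subset of $X$, this image is $\partial X/G$ endowed with its quotient topology (which agrees with the subspace topology from $X/G$).

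The only substantive step is the $G$-invariance of $\partial X$; once this is granted, the rest is a routine manipulation of the Alexandroff topology on $[1]$ and of quotient topologies. I expect the invariance-of-boundary route to be the most convenient one to cite, while the explicit chart computation using \cref{orthactionfixx1} provides a self-contained alternative that stays within the framework introduced in the paper.
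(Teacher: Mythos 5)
Your proposal is correct, and its second route (the local verification in equivariant charts via \cref{orthactionfixx1}) is exactly the paper's argument, which consists of the single sentence that the conclusion can be checked locally and then follows from \cref{orthactionfixx1}; you have simply spelled out the quotient-topology bookkeeping that the paper leaves implicit. Your preferred first route, via invariance of domain applied to the homeomorphisms $g\cdot(-)$, is a valid and self-contained alternative that does not even need the local linearity of the action, only that $X$ is a manifold with boundary.
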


    \begin{proof}
        Since the desired conclusion can be checked locally, the lemma follows from \cref{orthactionfixx1}.
    \end{proof}

    \begin{definition}
        The \textit{refined orbit type stratification} is the continuous map $$X/G\xrightarrow{(\partial, \text{iso}/G)}[1]\times\Cl{G}\op/G.$$
    \end{definition}

    \begin{proposition}\label{reforbtypisC0}
        Let $X$ be a topological space equipped with a locally smooth $G$-action with boundary. Then the refined orbit type stratification on $X/G$ is a $C^0$-stratification.
    \end{proposition}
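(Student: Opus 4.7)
Since $C^0$-stratification is a local condition, I would fix a point of $X/G$ and work in a $G$-invariant neighborhood of its orbit. Local smoothness provides a $G$-equivariant isomorphism of this neighborhood with a linear tube $G\times_H V$ or $G\times_H V_+$. Since the $G$-isotropy of $[g, v]$ is $g H_v g^{-1}$, passing to the $G$-quotient identifies the corresponding open subset of $X/G$ with $V/H$ or $V_+/H$, and the refined orbit type stratification pulls back to the refined $H$-orbit type stratification there (with the $[1]$-factor identically equal to $1$ in the first case, and distinguishing the interior of $V_+$ from $\partial V_+$ in the second). The problem therefore reduces to proving that $V/H$ and $V_+/H$ are $C^0$-stratified for any orthogonal $H$-action.

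I would prove this by induction on $\dim V$, the base case $\dim V = 0$ being trivial. For the inductive step at a point $\bar v\in V/H$ with representative $v$ and isotropy $H_v$, the slice theorem for the compact group $H_v$ provides an $H_v$-invariant slice $S\ni v$ and a local identification of $V/H$ near $\bar v$ with $S/H_v$. Orthogonality of the action yields an $H_v$-equivariant decomposition of $S$, in suitable coordinates, as $\rnum^k\oplus W$, where $\rnum^k$ corresponds to the fixed directions of $H_v$ in $S$ and $W$ is its orthogonal complement; since $0$ is the unique $H_v$-fixed point of $W$, radial scaling yields a homeomorphism $W/H_v\cong C(S(W)/H_v)$. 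Applying the induction to orthogonal tangent charts at points of the lower-dimensional sphere $S(W)$ shows that $S(W)/H_v$ is a compact $C^0$-stratified space, producing the required chart of the form $\rnum^k\times C(Z)$ around $\bar v$ with $Z = S(W)/H_v$.

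For the boundary case $V_+/H$, I would invoke \cref{orthactionfixx1}: the $H$-action fixes the $x_1$-axis and therefore preserves the hyperplane $\{x_1=0\}$. At interior points of $V_+$ the previous argument applies verbatim. At a boundary point $v\in\{x_1=0\}$, the $H$-equivariant splitting $V = \rnum\cdot e_1\oplus\{x_1=0\}$ yields locally $V_+/H\cong\rnum_{\geq 0}\times(\{x_1=0\}/H)$. Combining an inductive chart $\rnum^k\times C(Z')$ for $\{x_1=0\}/H$ with the half-line factor, and passing to polar coordinates in the $(\rnum_{\geq 0},[0,\infty))$-plane present in the product of the half-line with the open cone, produces a chart of the form $\rnum^k\times C(Z)$ where $Z$ is the compact closed cone on $Z'$, itself $C^0$-stratified by the data of $Z'$ together with the cone point. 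The refined $[1]$-component of the stratification is correctly recorded throughout, as the boundary condition $r=0$ in the half-line corresponds precisely to the cone locus in $Z$.

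The main technical obstacle is the bookkeeping of the refined stratification under each of the identifications above. In particular, one must verify that the $H_v$-conjugacy classes of isotropies appearing in the slice decomposition pass to the correct $G$-conjugacy classes in $\Cl{G}\op/G$ (which follows from the standard computation of isotropies in twisted products, together with the fact that a $C^0$-stratification with respect to a finer poset induces one with respect to any coarsening), and that the polar coordinate change in the boundary case respects the $[1]$-component. Both verifications are routine, which justifies the brevity of the sketch that follows.
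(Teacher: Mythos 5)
Your strategy is essentially the paper's: reduce via local smoothness to a linear tube, identify its $G$-quotient with $V/H$ or $V_+/H$ for an orthogonal $H$-action, induct on dimension, realize the non-fixed directions as an open cone on a sphere quotient to which the induction applies, and absorb the half-line of the boundary case into a closed cone. The one organizational difference is that you insert a further pointwise localization via the slice theorem at each $\bar v\in V/H$, whereas the paper exhibits the whole tube quotient at once as a single basic: it splits $\rnum^{n-1}=\rnum^l\times\rnum^k$ with $\rnum^l$ the $H$-fixed plane, so that $Y/H\cong\rnum_{\geq0}\times\rnum^l\times C(S^{k-1}/H)\cong\rnum^l\times C(\overline{C(S^{k-1}/H)})$ globally; since a basic on a compact $C^0$-stratified link is itself $C^0$-stratified, no further charts are needed. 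Your slice-theorem layer is correct but redundant, and it makes the induction slightly more delicate, since your inductive hypothesis is stated only for linear actions and must be re-localized before it applies to $S(W)$, exactly as you indicate.

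One item in your ``routine bookkeeping'' is not routine and is false as stated: a $C^0$-stratification over a poset $Q$ does not in general induce one over a coarsening of $Q$, because merging strata can destroy the requirement that each stratum be a topological manifold (stratify the union of the two coordinate axes in $\rnum^2$ with the origin as the closed stratum --- this is $C^0$ --- and then collapse the poset to a point). The compatibility you actually need, namely that the $H$- (or $H_v$-) orbit-type strata of the local model assemble into the $G$-orbit-type strata of $X/G$ while preserving the conical charts, does hold here, but the reason is the $G$-equivariant homogeneity of the tube --- the paper invokes the stratified homeomorphism $Y/H\cong U/G$ of \cite[Proposition 3.3]{bredon1972introduction} --- rather than any general coarsening principle. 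With that justification repaired, your argument goes through.
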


    \begin{proof}
        Assume that the dimension of $X$ is $n$. We prove the proposition by induction on $n$. 
        
        If $n=0$, one sees that the orbit space $X/G$ is a $0$-dimensional manifold. So assume that $n>0$. Since the statement is local, we need to show that for any linear tube $U\hookrightarrow X$ near an orbit of type $H$, $U/G$ with its own orbit type stratification is $C^0$-stratified. Let $Y$ be either $\rnum^n$ or $\rnum^n_+$. By \cite[Proposition 3.3]{bredon1972introduction}, there's a stratified homeomorphism $Y/H\xrightarrow{\cong}U/G$. We only treat the case $Y=\rnum^n_+$, as the other can be dealt with analogously. By \cref{orthactionfixx1}, the action of $H$ on $Y$ fixes the $\{x_1\geq 0\}$-axis, and restricts to an action on the $\{x_1=0\}$-hyperplane. Therefore, we have $Y/H = \rnum_{\geq 0}\times \rnum^{n-1}/H$. Decompose $\rnum^{n-1}$ as a product $\rnum^l\times\rnum^k\cong\rnum^l\times C(S^{k-1})$, where $\rnum^l\times\{0\}$ corresponds to the plane fixed by $H$. Therefore, we get $Y/H \cong \rnum_{\geq 0}\times\rnum^l\times C(S^{k-1}/H)$. Since the action of $H$ on $S^{k-1}$ is locally smooth (as a restriction of an orthogonal action), by the inductive hypothesis we deduce that $S^{k-1}/H$, with its induced stratification, is compact and $C^0$-stratified. Hence we deduce that $Y/H$ is isomorphic to the basic $\rnum_{\geq 0}\times\rnum^{l}\times C(S^{k-1}/H)\cong \rnum^l\times C(\overline{C(S^{k-1}/H)})$, where $\overline{C(S^{k-1}/H)})$ denotes the closed cone on $S^{k-1}/H$.  
    \end{proof}

    \subsection{Quinn's example}

    We are finally ready to introduce the example of a compact $C^0$-stratified space whose exit-path $\infty$-category is not finite.

    \begin{theorem}\label{examplequinn}
        There exists a compact $C^0$-stratified space $X\rightarrow P$ such that the $\infty$-category $\exit{P}{X}$ is not finite.
    \end{theorem}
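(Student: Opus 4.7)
The plan is to produce the stratified space $X\rightarrow P$ as the orbit space of a carefully chosen locally smooth action of a compact Lie group $G$ on a compact manifold (possibly with boundary) $M$, equipped with its refined orbit type stratification. The input we need from Quinn's \cite[Proposition 2.1.4]{quinn1982ends} is precisely a locally smooth $G$-action on a compact manifold for which one of the strata of the orbit space $M/G$ is compact (i.e.\ finitely dominated as a weak homotopy type) but has nontrivial Wall finiteness obstruction, so that its weak homotopy type is \emph{not} a finite $\infty$-groupoid. Quinn's construction is engineered precisely to realize arbitrary elements of $\widetilde{K}_0(\mathbb{Z}[\pi])$ as obstructions arising from the geometry of an equivariant neighbourhood system; we take $X := M/G$.

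With this example in hand, the argument is then very short. First, \cref{reforbtypisC0} guarantees that the refined orbit type stratification $X\rightarrow P$ is a $C^0$-stratification, and $P$ is finite because $M$ is compact and the $G$-action is locally smooth, so there are only finitely many orbit types. Thus $X\rightarrow P$ is a compact $C^0$-stratified space. Next, I would argue by contradiction: suppose $\exit{P}{X}$ is finite. Then \cref{restrstratapeter} forces every stratum $\sing{X_p}$ to be a finite $\infty$-groupoid. But by construction, the stratum singled out by Quinn's example has nontrivial Wall obstruction and is therefore not finite, a contradiction. Consequently $\exit{P}{X}$ is not finite, while by \cref{cptC0strat=>sratacpt} it is nonetheless compact.

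For the final assertion, that $X\rightarrow P$ admits no conically smooth structure compatible with its stratification, one appeals again to \cref{cptC0strat=>sratacpt}: in the presence of such a structure, $\exit{P}{X}$ would be finite, contradicting what we just proved.

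The main obstacle is the geometric step of extracting from Quinn's paper the precise action and identifying which stratum of $M/G$ carries the nontrivial finiteness obstruction; this is really where all the content lies, since the rest is a formal consequence of results proved earlier in the paper. Once the relevant stratum is identified and its non-finiteness quoted from \cite{quinn1982ends}, the combination with \cref{reforbtypisC0}, \cref{restrstratapeter}, and \cref{cptC0strat=>sratacpt} closes the argument.
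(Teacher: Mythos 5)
Your proposal follows essentially the same route as the paper: take Quinn's locally smooth $G$-action on a disc from \cite[Proposition 2.1.4]{quinn1982ends}, equip the orbit space with its refined orbit type stratification (which is $C^0$ by \cref{reforbtypisC0}), and conclude via \cref{restrstratapeter} from the non-finiteness of the relevant stratum. The only detail the paper makes explicit that you leave implicit is the identification of the offending stratum as the \emph{open} stratum (via \cite[Proposition 2.1.3]{quinn1982ends}, which converts the non-vanishing mapping cylinder obstruction into non-finiteness of the open stratum) together with the observation that the open stratum of the orbit type stratification coincides with that of the refined one.
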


    \begin{proof}
        In \cite[Proposition 2.1.4]{quinn1982ends}, Quinn provides an example of a locally smooth $G$-action with boundary on a disc $D$, so that the $D/G$, equipped with the orbit type stratification, has non-vanishing mapping cylinder obstruction. By \cite[Proposition 2.1.3]{quinn1982ends}, this means that the open stratum of $D/G$ does not have the homotopy type of a finite CW-complex. 

        Let $X\rightarrow P$ be $D/G$ equipped with its refined orbit-type stratification. By \cref{reforbtypisC0}, $X\rightarrow P$ is a compact $C^0$-stratified space. Since the open stratum of the orbit type stratification agrees with the open stratum in the refined one, \cite[Proposition 2.1.4]{quinn1982ends} implies that the homotopy type of the open stratum in $X\rightarrow P$ is not finite. Therefore, we may conclude by \cref{restrstratapeter}.
    \end{proof}

    We conclude with the following observation.

    \begin{corollary}
        Let $X\rightarrow P$ be the $C^0$-stratified topological space considered in \cref{examplequinn}. Then $X\rightarrow P$ does not admit any conically smooth structure compatible with its $P$ stratification.
    \end{corollary}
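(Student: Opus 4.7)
The plan is a one-line contradiction argument leveraging \cref{cptC0strat=>sratacpt}. Suppose, for the sake of contradiction, that the $C^0$-stratified space $X\rightarrow P$ from \cref{examplequinn} does admit a conically smooth structure compatible with its $P$-stratification. Since $X$ is compact (as a quotient of a disc by a compact group action in Quinn's construction), the second assertion of \cref{cptC0strat=>sratacpt} applies and forces $\exit{P}{X}$ to be finite.

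But this directly contradicts \cref{examplequinn}, which asserts that $\exit{P}{X}$ is not finite. Hence no such conically smooth structure can exist.

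The only small care needed is to verify that the notion of ``compatibility with the $P$-stratification'' in the hypothesis is exactly what is used in \cref{cptC0strat=>sratacpt}: the latter result takes as input a $P$-stratified space together with a conically smooth atlas that refines, or rather presents, the same underlying stratification, and the inductive argument via conical charts of the form $\rnum^n\times C(Z)$ only depends on the atlas respecting $P$. This is precisely the meaning of compatibility here, so there is no obstacle and no further work is needed beyond citing \cref{cptC0strat=>sratacpt} and \cref{examplequinn}.
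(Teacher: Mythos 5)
Your proof is correct and is essentially the paper's argument: the paper also derives a contradiction between the non-finiteness of $\exit{P}{X}$ from \cref{examplequinn} and the finiteness that a conically smooth structure on a compact stratified space would force. The only cosmetic difference is that you invoke the second assertion of \cref{cptC0strat=>sratacpt} directly, while the paper cites \cref{cpt/finstrata=>cprfinexitinC0/smooth} (which is deduced from that theorem); if anything your citation is the more direct one.
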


    \begin{proof}
        This follows directly from \cref{examplequinn} and \cref{cpt/finstrata=>cprfinexitinC0/smooth}.
    \end{proof}




\newpage
\nocite{*}
\bibliographystyle{alpha}
\bibliography{finite}
\end{document}